\newtheorem{thm}{Theorem}[section]
\newtheorem{lem}[thm]{Lemma}
\newtheorem{rem}[thm]{Remark}
\newtheorem{prop}[thm]{Proposition}
\newtheorem{claim}[thm]{Claim}
\newcommand\ct{{\text{\rm ct}}}
\newcommand\lct{{\text{\rm lct}}}
\newcommand\CC{{\mathbb{C}}}
\newcommand\RR{{\mathbb{R}}}
\newcommand\cX{{\mathcal{X}}}
\newcommand\cY{{\mathcal{Y}}}
\newcommand\cZ{{\mathcal{Z}}}
\newcommand\bR{{\mathbb R}}
\newcommand\bZ{{\mathbb Z}}
\newcommand\bC{{\mathbb C}}
\newcommand\bQ{{\mathbb Q}}
\newcommand\bN{{\mathbb N}}
\begin{document}
\title{Classification of threefold canonical thresholds}
\author{Jheng-Jie Chen}
\address{\rm Department of Mathematics, National Central University, Taoyuan City, 320, Taiwan}
\email{jhengjie@math.ncu.edu.tw}
\author{Jiun-Cheng Chen}
\address{\rm Department of Mathematics\\ Third General Building\\ National Tsing Hua University\\ No. 101 Sec 2 Kuang Fu Road\\ Hsinchu, 30043\\ Taiwan}
\email{jcchen@math.nthu.edu.tw}
\author{Hung-Yi Wu}
\address{\rm Department of Mathematics, National Central University, Taoyuan City, 320, Taiwan}
\email{112281002@cc.ncu.edu.tw}

\maketitle

\begin{abstract}  

 We show that the set $\mathcal{T}_{3, \textup{sm}}^{\textup{can}}$ of smooth threefold canonical thresholds coincides with $\mathcal{T}_{2, \textup{sm}}^{\textup{lc}}=\mathcal{HT}_{2}$, where $\mathcal{HT}_{2}$ is the $2$-dimensional hypersurface log canonical thresholds characterized by Kuwata \cite{K99a, K99b}. We classify the set $\mathcal{T}_{3}^{\textup{can}}$ of threefold canonical thresholds. 
More precisely, we prove   $\mathcal{T}_{3}^{\textup{can}}= \{0\} \cup \{\frac{4}{5}\} \cup \mathcal{T}_{3, \textup{sm}}^{\textup{can}}$. 
\end{abstract}

\section{introduction}

We work over the complex number field $\mathbb{C}$.

For a log canonical pair $(X,S)$, the log canonical threshold $$\lct(X,S):=\sup\{t\in \bR\ |\ (X,tS) \textup{ is log canonical}\}$$ is a natural and fundamental invariant which measures the complexity of singularities in algebraic geometry (see \cite{Kol97, Kol08}). 
The set of $n$-dimensional log canonical thresholds  $$\mathcal{T}_{n}^{\textup{lc}}:=\{\lct(X,S) | \dim X = n \}$$
has very interesting properties related to the minimal model program (cf. \cite{Sho93, MP04, Bir07,HMX, LMX24}). Note that it is an interesting and very hard question to describe the set $\mathcal{T}_{n}^{\textup{lc}}$ explicitly. Certain classifications in $n\leq 3$ were investigated (cf. \cite{Ale93, Sho93, K99a, K99b, Kol08}).  

We consider an analogous notion. Let $X \ni P$ be a germ of complex algebraic variety with at worst canonical singularities and $S \ni P$ be a prime $\mathbb{Q}$-Cartier divisor. The canonical threshold of the pair $(X,S)$ is defined as 
$$\ct(X,S):=\sup\{t\in \bR\ |\ \textup{the pair }(X,tS) \textup{ is canonical}\}.$$ For every natural  number $n$, the set of canonical thresholds is defined by
$$\mathcal{T}_{n}^{\textup{can}} := \{\ct(X, S) | \dim X = n \}.$$ 

It is known that  $\mathcal{T}_{2}^{\textup{can}}=\{0\} \cup \{\frac{1}{k}\}_{k\in \mathbb{N}}$ where $\mathbb{N}$ denotes the set of positive integers.
In this paper, we focus on the classifications in the case $n=3$. 

Canonical thresholds appear naturally and play crucial roles in the Sarkisov program. 
Recall that every canonical threshold  $\ct(X,S)\in \mathcal{T}^{\textup{can}}_{3}$ is computed by some divisorial contraction $\sigma\colon Y\to X$ (cf. \cite{Corti} or \cite{Matsuki}).
As the remarkable works of classifications of threefold divisorial contractions to points are completely investigated by Hayakawa, Kawakita, Kawamata, Mori,  Yamamoto, and many others (cf. \cite{Mori82, Ka,Haya99,Haya00,Kawakita01,Kawakita02,Kawakita05,Yama18}), it is then a natural question to describe the set $\mathcal{T}^{\textup{can}}_3$ explicitly.

We collect some known results toward the description of the set $\mathcal{T}^{\textup{can}}_3$ as follows.
 Note that certain numbers are in this set, e.g. $\min \{\frac{1}{\alpha}+\frac{1}{\beta}, 1\}$ (by a result of Stepanov \cite{Stepanov}) for all positive integers $\alpha$ and $\beta$. It is also known that $\frac{4}{5} \in \mathcal{T}_{3}^{\textup{can}}$ \cite{Prok08}. Consider the subset $$\mathcal{T}_{3, \textup{sm}}^{\textup{can}}:=\{\ct(X,S)\ |\ \dim X=3, X\textup{ is smooth}\} \subseteq \mathcal{T}_{3}^{\textup{can}}.$$ 
Prokhorov proves that $\mathcal{T}_{3}^{\textup{can}} \cap (\frac{5}{6},1)= \emptyset$
as well as $\ct(X,S) \leq \frac{4}{5}$ when $X$ is singular \cite{Prok08}.
Stepanov proves that $\mathcal{T}_{3}^{\textup{can}} \cap (\frac{4}{5},\frac{5}{6}) = \emptyset$ \cite{Stepanov}.  
Another interesting question concerning the set $\mathcal{T}_{3}^{\textup{can}}$  is to determine if it  satisfies the ACC. 
Stepanov obtains that $\mathcal{T}_{3, \textup{sm}}^{\textup{can}}$ satisfies the ACC and establishes the explicit formula for 
$\ct_P(X,S)$ when $P\in S$ is a Brieskorn singularity in \cite{Stepanov}. 
Applying Stepanov's argument, 
the first named author proves the ACC for $\mathcal{T}_{3}^{\textup{can}}$ \cite{3ct}.
He also shows that $\mathcal{T}_{3}^{\textup{can}}\cap (\frac{1}{2},1)$ coincides with $\{ \frac{1}{2}+\frac{1}{p}\}_{p \in \mathbb{Z}_{\ge 3}} \cup \{ \frac{4}{5}\}$. Moreover, Han, Liu and Luo and the first named author independently prove that the accumulation points of $\mathcal{T}_{3}^{\textup{can}}$ coincides with $\mathcal{T}_{2}^{\textup{can}}\setminus\{1\}$ and generalized the ACC to pairs in \cite{HLL, 3ct}.

We now state the main results of this paper. The first result is an explicit description of the set $\mathcal{T}_{3, \textup{sm}}^{\textup{can}}$ of threefold canonical thresholds. We also discover $\mathcal{T}^{\textup{lc}}_{2,\textup{sm}}= \mathcal{T}^{\textup{can}}_{3,\textup{sm}}$. It is interesting to compare this with the well-known theorem  that the set of limit points of $\mathcal{T}^{\textup{lc}}_{n+1, \textup{sm}}$ is $\mathcal{T}^{\textup{lc}}_{n, \textup{sm}}$ \cite{dFM} \cite{Kol08}.
\begin{thm}\label{classfysmcintroduction}(= Theorem \ref{classfysmct})
The set $\mathcal{T}^{\textup{can}}_{3,\textup{sm}}$ consists of $C\cap [0,1]$ where $C$ is the following set 
$$\left\{\frac{\alpha+\beta}{p_1\alpha+p_2\beta} \Bigg| \begin{array}{l} \alpha, \beta, p_2\in \mathbb{N} \textup{ and } p_1 \in \mathbb{Z}_{\ge 0 } \textup{ such that }\alpha\leq \beta,\ \\ \gcd(\alpha,\beta)=1 \textup{ and either } p_2\geq \max\{\alpha, p_1\} \textup{ or }p_2=p_1  \end{array} \right\}.$$ 
In particular, we have $$\mathcal{T}^{\textup{can}}_{3,\textup{sm}}=\mathcal{T}_{2,\textup{sm}}^{\textup{lc}} (= \mathcal{HT}_2 ).$$
\end{thm}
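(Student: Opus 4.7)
I would establish $\mathcal{T}^{\textup{can}}_{3,\textup{sm}} = C \cap [0,1]$ by proving the two inclusions separately, then invoke Kuwata's characterization $\mathcal{HT}_2 = \mathcal{T}^{\textup{lc}}_{2,\textup{sm}} = C \cap [0,1]$ to conclude the final equality in the theorem.

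For the inclusion $C \cap [0,1] \subseteq \mathcal{T}^{\textup{can}}_{3,\textup{sm}}$, the strategy is the product (cylinder) construction. Given $c = \frac{\alpha+\beta}{p_1\alpha + p_2\beta} \in C \cap [0,1]$, Kuwata provides a plane curve germ $C' = \{g(x,y) = 0\} \subset \mathbb{C}^2$ with $\lct(\mathbb{C}^2, C') = c$. Take the cylinder $S = \{g(x,y) = 0\} \subset \mathbb{C}^3$ (a prime divisor), and show $\ct(\mathbb{C}^3, S) = c$. The upper bound follows from the 2D minimizing weighted blowup with weights $(a,b)$ extended trivially in the $z$-direction to weights $(a,b,1)$, which produces discrepancy $a+b$ and multiplicity $\text{wt}_{(a,b)}(g)$. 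The lower bound holds because, for any 3D weighted blowup $(a,b,c')$ with $\gcd = 1$, one has $\frac{a+b+c'-1}{\text{wt}_{(a,b)}(g)} \geq \frac{a+b}{\text{wt}_{(a,b)}(g)} \geq \lct(\mathbb{C}^2, C') = c$, since $g$ does not involve $z$.

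For the inclusion $\mathcal{T}^{\textup{can}}_{3,\textup{sm}} \subseteq C \cap [0,1]$, the key input is Kawakita's classification of divisorial contractions from a smooth 3-fold to a point: every such contraction is, in suitable local coordinates, a weighted blowup with weights $(1,a,b)$ and $\gcd(a,b) = 1$. Since every canonical threshold is computed by a divisorial contraction (via the Sarkisov program, cf.\ the introduction), we obtain $c = \frac{a+b}{N}$ where $N = \text{wt}_{(1,a,b)}(f)$ for a local defining equation $f$ of $S$. Setting $\alpha = \min\{a,b\}$, $\beta = \max\{a,b\}$ gives $\alpha \leq \beta$ and $\gcd(\alpha, \beta) = 1$. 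It then remains to exhibit integers $p_1 \geq 0$, $p_2 \geq 1$ with $N = p_1\alpha + p_2\beta$ satisfying one of the two Kuwata conditions. The plan is a Newton-polygon analysis of $f$ relative to the weight vector $(1,a,b)$: optimality of $(1,a,b)$ restricts the positions of the minimum-weight monomials of $f$, and among the 1-parameter family of integer solutions to $p_1\alpha + p_2\beta = N$, at least one satisfies $p_2 \geq \max\{\alpha, p_1\}$ or $p_2 = p_1$. A normalizing coordinate change may be needed to remove inconvenient $x$-dependent terms from the leading part of $f$.

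\emph{Main obstacle.} The forward inclusion is the crux. Translating the minimality of the 3D weighted blowup into Kuwata's arithmetic conditions on $(p_1, p_2)$ requires careful case analysis, ruling out alternative weighted blowups that might produce a strictly smaller ratio. A subtlety: the naive geometric reduction $f \mapsto f(0,y,z)$ need not realize $c$ as a 2D lct; for instance, $f = xy^2 + z^3$ gives $\ct(\mathbb{C}^3, \{f=0\}) = 2/3$, whereas $\lct(\mathbb{C}^2, \{f(0,y,z)=0\}) = \lct(\mathbb{C}^2, \{z^3=0\}) = 1/3$. The argument must therefore proceed combinatorially, matching the \emph{values} of the invariants in the two sets, rather than by a direct surface-to-curve correspondence.
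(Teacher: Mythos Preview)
Your reverse inclusion via the cylinder construction fails. If $S = \{g(x,y) = 0\} \subset \hat{\mathbb{C}}^3$ is the cylinder over a plane curve germ $C' \subset \hat{\mathbb{C}}^2$, then by the product behaviour of discrepancies $(\hat{\mathbb{C}}^3, cS)$ is canonical if and only if $(\hat{\mathbb{C}}^2, cC')$ is canonical, so $\ct(\hat{\mathbb{C}}^3, S) = \ct(\hat{\mathbb{C}}^2, C') \in \{0\} \cup \{1/k\}_{k\in\mathbb{N}}$, which is in general strictly smaller than $\lct(\hat{\mathbb{C}}^2, C')$. For instance $g = x^2 + y^3$ gives $\ct = 1/2$, not $5/6$. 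Your lower-bound check only tests weighted blowups of the \emph{origin} of $\mathbb{C}^3$; it misses divisors centered along the $z$-axis (the singular locus of the cylinder), and those are precisely what drag the threshold down to the two-dimensional \emph{canonical} threshold rather than the log canonical one. The paper avoids this by constructing, for each value in $C \cap [0,1]$, an explicit hypersurface in $\hat{\mathbb{C}}^3$ with an isolated singularity at the origin (of the shape $f = x^m + y^{p_1}z^{p_2} + \cdots$) whose proper transform on the $(1,\alpha,\beta)$-weighted blowup is smooth away from the two cyclic-quotient points, and then certifies the threshold at those points via Kawamata's computation (Lemma~\ref{compute smct}).

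For the forward inclusion your outline is too vague to execute. The paper does not run a general Newton-polygon argument: it compares $w = (1,\alpha,\beta)$ with two specific auxiliary weights $w_2 = (1,s,t)$ and $w_3 = (1,\bar s,\bar t)$, determined by $\alpha t = \beta s + 1$ and $\alpha \bar t = \beta \bar s - 1$, which are exactly the Kawamata blowups at the two quotient-singular points of $Y$. Optimality of $w$ then gives $\lfloor \frac{s+t}{\alpha+\beta} m \rfloor \geq \lceil \frac{s}{\alpha} m \rceil$ and its $w_3$-analogue (via \cite[Lemma~2.1]{3ct}), and combined with $m \geq \alpha\beta$ from \cite[Proposition~3.3]{3ct} these two floor--ceiling inequalities force $p_2 \geq p_1$ and, unless $p_1 = p_2$, also $p_2 \geq \alpha$. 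Without isolating this particular pair of auxiliary weights it is unclear how a generic ``optimality restricts the minimum-weight monomials'' argument would recover Kuwata's arithmetic conditions.
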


The next result characterizes the set of $3$-dimensional canonical thresholds $\mathcal{T}^{\textup{can}}_{3}$.
It is plausible that most of the canonical thresholds should come form the smooth case already. 
We show that $\frac{4}{5}$ is the only non-trivial exception.
\begin{thm} \label{classfy3ctintroduction} (=Theorem \ref{classfy3ct})
We have   $\mathcal{T}^{\textup{can}}_{3} = \{0\} \cup \{\frac{4}{5}\}\cup \mathcal{T}_{3, \textup{sm}}^{\textup{can}}.$   
\end{thm}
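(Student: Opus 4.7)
The plan is to prove the non-trivial inclusion $\mathcal{T}^{\textup{can}}_3 \subseteq \{0\} \cup \{4/5\} \cup \mathcal{T}^{\textup{can}}_{3,\textup{sm}}$; the reverse inclusion is immediate since the smooth thresholds are in by definition, $0$ is trivial, and $4/5 \in \mathcal{T}^{\textup{can}}_3$ by Prokhorov. Fix $c = \ct_P(X, S)$ with $\dim X = 3$ and $X$ canonical at $P$ but not smooth. Prokhorov's bound gives $c \le 4/5$, so the task is to show $c \in \{0, 4/5\} \cup \mathcal{T}^{\textup{can}}_{3,\textup{sm}}$.

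First I would reduce to the case where $X$ is terminal at $P$. Take a $\mathbb{Q}$-factorial crepant terminalization $\pi\colon X' \to X$; if the strict transform $S'$ of $S$ meets any crepant exceptional divisor of $\pi$ then $\ct(X, S) = 0$ and we are done, and otherwise $\ct(X, S) = \min_{P' \in \pi^{-1}(P)} \ct_{P'}(X', S')$ with $X'$ terminal at each $P'$. Next I would use the fact that every threefold canonical threshold is computed by some divisorial contraction $\sigma\colon Y \to X$ extracting a prime divisor $E$ centered at $P$, giving $c = a(E, X)/\mult_E(\sigma^* S)$, and appeal to the classification of divisorial contractions to terminal threefold points of types $cA_n$, $cD_n$, $cE_n$, and cyclic quotient terminal (Mori, Hayakawa, Kawakita, Kawamata, Yamamoto).

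For the interval $(1/2, 1)$ the first author's earlier result gives $\mathcal{T}^{\textup{can}}_3 \cap (1/2, 1) = \{1/2 + 1/p\}_{p \ge 3} \cup \{4/5\}$, and every $1/2 + 1/p$ already lies in $\mathcal{T}^{\textup{can}}_{3,\textup{sm}}$ by Theorem~\ref{classfysmct} with $(\alpha, \beta, p_1, p_2) = (2, p, 0, 2)$ for $p$ odd and $(1, p/2, 0, 2)$ for $p$ even. Hence on $(1/2, 4/5]$ the singular case contributes only $4/5$, and it remains to handle $c \in (0, 1/2]$. For each pair (terminal singularity type, divisorial contraction class) the classification gives a closed-form expression for $c$ in terms of the local weights of the weighted blow-up $\sigma$ and the multiplicity data of $S$ along $E$. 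For each such $c$ I would produce integers $(\alpha, \beta, p_1, p_2)$ meeting the numerical constraints of Theorem~\ref{classfysmct} with $(\alpha+\beta)/(p_1\alpha+p_2\beta) = c$; typically this is done by recognizing the singular weighted blow-up as numerically equivalent to a weighted blow-up of a smooth hypersurface germ in $\mathbb{C}^3$ with the matching weights.

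The main obstacle is the exhaustive case analysis in the last step: the Hayakawa-Kawakita-Yamamoto classification of divisorial contractions to $cD_n$, $cE_n$, and higher-index cyclic quotient points consists of many subcases parametrized by several integer invariants, and one must verify for \emph{every} class that the resulting threshold either matches Theorem~\ref{classfysmct} or equals $4/5$. Organizing this matching uniformly, and making sure no sporadic rational number outside $\mathcal{T}^{\textup{can}}_{3,\textup{sm}} \cup \{0, 4/5\}$ escapes, constitutes the combinatorial heart of the proof.
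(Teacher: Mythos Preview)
Your outline is broadly correct, but the paper organizes the reduction differently and this difference matters. Instead of splitting by the interval $(1/2,1)$ versus $(0,1/2]$, the paper splits by the \emph{weighted discrepancy} $a$ of the divisorial contraction computing the threshold: either $a\le 4$ (the set $\aleph_4$) or $a\ge 5$. When $a\le 4$ the threshold is $a/m$ with $a\in\{1,2,3,4\}$, and a short direct check (Remark~\ref{quickremark1}) shows every such number except $4/5$ already lies in $C\cap[0,1]=\mathcal{T}^{\textup{can}}_{3,\textup{sm}}$. When $a\ge 5$, Kawakita's classification forces the center to be of type $cA$, $cA/n$, $cD$, or $cD/2$ only. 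The $cE_n$ types and cyclic quotient singularities you list never occur here, because for those the weighted discrepancy is always at most $4$; so the paper simply does not need to treat them. Your interval split would still leave you with the full list of singularity types on $(0,1/2]$, which is avoidable.

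For the four surviving types with $a\ge 5$, the paper's method is not quite ``recognize the singular weighted blow-up as numerically equivalent to a smooth one''. Rather, for each type one introduces one or two auxiliary weights $w_1,w_2$ dominating suitable multiples of the given weight $w$, applies \cite[Lemma~2.1]{3ct} to get floor/ceiling inequalities between the multiplicities, and from these extracts integers $p_1,p_2$ with $dm=p_1 r_1+p_2 r_2$ (or an analogue) satisfying exactly the constraints $p_2\ge p_1$ and $p_2\ge\alpha$ that define $C$. In the non-Gorenstein cases $cA/n$ and $cD/2$ an additional semi-invariance congruence on the multiplicities is needed. This auxiliary-weight comparison is the real engine of the argument and is absent from your proposal. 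Finally, a small imprecision: in your terminalization step, the condition giving $\ct(X,S)=0$ is that the \emph{center} of a crepant exceptional divisor lies in $S$ (so that $\mult_E S>0$), not that the strict transform of $S$ meets $E$.
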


In what follows, we explain the proof of Theorems \ref{classfysmcintroduction}. 
Suppose that $\ct(X,S)\in \mathcal{T}^{\textup{can}}_{3,\textup{sm}}$ is a canonical threshold. It is known that $\ct(X,S)\leq 1$ and $\ct(X,S)$ can be computed as a weighted blow up $Y\to X=\hat{\mathbb{C}}^3$ at the origin of $\hat{\mathbb{C}}^3$ with weights $w:=(1,\alpha,\beta)$ for some positive relative prime integers $\alpha$ and $\beta$ by Kawakita \cite{Kawakita01} such that $\ct(X,S)=\frac{\alpha+\beta}{m}$ where $m=w(f)$ is the weighted multiplicity of the defining convergent power series $f$ of the prime divisor $S$. For simplicity, we assume that $1<\alpha<\beta$. 
$Y$ is then a union of three affine open subsets $U_1, U_2$ and $U_3$ such that $U_1$ is smooth, 
\[U_2\simeq \hat{\mathbb{C}}^3/\frac{1}{\alpha}(-s,s, 1) \textup{ and }U_3\simeq \hat{\mathbb{C}}^3/\frac{1}{\beta}(t,1, -t),
\] where $s$ and $t$ are the positive integers with $\alpha t=\beta s+1$. Denote by $\bar{s}:=\alpha-s$ and $\bar{t}:=\beta-t$ so that $\alpha\bar{t}=\beta\bar{s}-1$. 
Note that there are two weights $w_2=(1,s,t)$ and $w_3=(1,\bar{s},\bar{t})$ over $X$ corresponding to Kawamata blow ups at the origins of $U_2$ and $U_3$ respectively.
Comparing $w$ with two auxiliary weights $w_2$ and $w_3$, \cite[Lemma 2.1]{3ct} yields that 
\begin{align}
    \lfloor  \frac{s+t}{\alpha+\beta} m \rfloor  \ge   \lceil \frac{s}{\alpha} m \rceil\textup{ and }\lfloor  \frac{\bar{s}+\bar{t}}{\alpha+\beta} m \rfloor\geq \lceil \frac{\bar{t}}{\beta} m \rceil. \label{SNI1}
\end{align}

By \cite[Proposition 3.3]{3ct}, we have  $m\geq \alpha\beta$. In particular, there exist non-negative integers $p_1$ and $p_2$ with $m=p_1\alpha+p_2\beta$ and $p_1<\beta$.
From the inequalities in (\ref{SNI1}) and the above identities $\alpha t=\beta s+1$ and $ \alpha \bar{t}=\beta \bar{s}-1$, we obtain $p_2\geq p_1$ (resp. $p_2\geq \alpha$ if $p_1\neq p_2$). Thus, $\ct(X,S)\in C\cap [0,1]$. Conversely, given a number $\frac{\alpha+\beta}{p_1\alpha+p_2\beta}\in C\cap [0,1]$ where $\alpha,\beta,p_2$ are positive integers and $p_1$ is a non-negative integer with $\alpha\leq \beta$ and $\gcd(\alpha,\beta)=1$ such that either $p_2 \geq \max \{\alpha, p_1 \}$ or $p_1=p_2$. We assume $\alpha>1$ for simplicity. Denote by $m=p_1\alpha+p_2\beta$ and weights $w=(1,\alpha,\beta), w_2=(1,s,t)$ and $ w_3=(1,\bar{s},\bar{t})$. We are able to construct a prime divisor  $S:=\{f=0\}$ (near the origin of $\hat{\mathbb{C}}^3$) satisfying the following two conditions: \begin{itemize}
    \item the proper transform $S_Y$ of $S$ in $Y$ is smooth except probably the origins of $U_2$ and $U_3$ (near the exceptional divisor of weighted blow up $\sigma\colon Y\to X=\hat{\mathbb{C}}^3$ with weights $w$);
    \item we have the inequalities 
\begin{align*} \frac{s+t}{\alpha+\beta}m\geq w_2(f) \textup{ and }\frac{\bar{s}+\bar{t}}{\alpha+\beta}m\geq w_3(f)  \label{SNI2}\end{align*} where $m=w(f), w_2(f)$ and $w_3(f)$ are the weighted multiplicities.
\end{itemize}
Thanks to computations of canonical thresholds for terminal cyclic quotient singularities studied by Kawamata \cite[Lemma 7]{Ka}, it follows that $\frac{\alpha+\beta}{p_1\alpha+p_2\beta}\in \mathcal{T}^{\textup{can}}_{3,\textup{sm}}$ (see Lemma \ref{compute smct}). Therefore, we have established $\mathcal{T}^{\textup{can}}_{3,\textup{sm}}=C\cap [0,1]$. It is then straightforward to check that $C\cap [0,1]$ coincides with $\mathcal{HT}_2$ where $\mathcal{HT}_2$ is the set of $2$-dimensional hypersurface log canonical thresholds explicitly classified by Kuwata \cite{K99a, K99b}. Thus, Theorem \ref{classfysmcintroduction} follows.

Theorem \ref{classfy3ctintroduction} is basically derived from the argument of Theorem \ref{classfysmcintroduction} and the classifications of divisorial contractions by Kawakita \cite{Kawakita05} which we explain as follows. Recall that for every projective threefold $X$ with at worst $\bQ$-factorial terminal singularities,  $\ct(X,S)\in \mathcal{T}^{\textup{can}}_{3}$ is obtained as $\ct(X,S)=\frac{a}{m}$ for some divisorial extraction $\sigma: Y \to X$ extracting only one irreducible divisor $E$ and 
\[K_Y=\sigma^*K_X+\frac{a}{n} E \textup{ and } S_Y=\sigma^*S-\frac{m}{n}E\] where $\frac{a}{n}$ (resp. $\frac{m}{n}$) is the discrepancy (resp. multiplicity) and $n$ is the index of the center $\sigma(E)$ (cf \cite{Corti} or \cite{Matsuki}). We say that $\ct(X,S)$ is computed by $\sigma$ and denote the weighted discrepancy and weighted discrepancy by $a$ and $m$, respectively. 
As in \cite{3ct}, consider \begin{equation*}
 \mathcal{T}_{3, *,\geq 5}^{\textup{can}}:=\left\{\ct(X,S) \Bigg| \begin{array}{l}  \ct(X,S)  \text{ is computed by } \sigma: Y \to X \textup{ with weighted }\\
                                      \text{ discrepancy }a  \ge 5 \text{ contracting a divisor } E  \\
                                         \textup{to a closed point } \sigma(E)=P\in X \text{ of type } *  \end{array} \right\}
 \end{equation*}
where the type $*$ can be $cA$ (resp. $cA/m$, $cD$ or $cD/2$) if $P \in X$ is a singular point of type $cA$ (resp. $cA/m$, $cD$ or $cD/2$).
According to classifications of threefold divisorial contractions by Kawakita in \cite{Kawakita05}, 
we have the following decomposition :
 $$\label{decomposition}
 \mathcal{T}_{3}^{\textup{can}} = \{0\} \cup \aleph_4 \cup \mathcal{T}_{3,\textup{sm}}^{\textup{can}} \cup \mathcal{T}_{3, cA,\geq 5}^{\textup{can}} \cup \mathcal{T}_{3, cA/m,\geq 5}^{\textup{can}} \cup \mathcal{T}_{3,cD,\geq 5}^{\textup{can}} \cup \mathcal{T}_{3, cD/2,\geq 5}^{\textup{can}} \eqno{(2)}$$
where $\aleph_4:=\mathcal{T}_{3}^{\textup{can}}\cap \{\frac{a}{m}\}_{a,m\in \bN,\ a\leq \max\{4,m\}}$.  
From Theorem \ref{classfysmcintroduction} and a computation of Prokhorov \cite{Prok08}, one has $\aleph_4 \cup \mathcal{T}_{3,\textup{sm}}^{\textup{can}}=\{\frac{4}{5}\}\cup \mathcal{T}_{3,\textup{sm}}^{\textup{can}}$ (see Remark \ref{quickremark1}). Theorem \ref{classfy3ctintroduction} then follows from the inclusions $\mathcal{T}_{3,*,\ge 5}^{\textup{can}}\subseteq C\cap [0,\frac{4}{5}]$ for $*=cA, cA/n, cD$ and $cD/2$ in Proposition \ref{possible 3ctcA}, \ref{possible 3ctcA/n new}, \ref{classfycDct} and \ref{classfycD2ct}. Those propositions utilize the argument of the inclusion $\mathcal{T}_{3,\textup{sm}}^{\textup{can}}\subseteq C\cap [0,1]$ in Theorem \ref{classfysmcintroduction} with more careful considerations.


\subsection{Acknowledgement} The first named author was partially supported by the National Science and Technology Council of Taiwan (Grant Numbers: 112-2115-M-008 -006 -MY2 and 113-2123-M-002-019-). We would like to thank Professors Jungkai Chen, Hsueh-Yung Lin, Jihao Liu, and Dr. Iacopo Brivio for some helpful conversations and encouragement.

\section{Classification of  $\mathcal{T}^{\textup{can}}_{3,\textup{sm}}$}
The aim of this section is to classify the set $\mathcal{T}^{\textup{can}}_{3,\textup{sm}}$ of smooth threefold canonical thresholds. We begin with the following lemma using the computation of canonical threshold for terminal cyclic quotient singularity by Kawamata in \cite{Ka}.

\begin{lem}\label{compute smct}
    Let $\mu_1\colon Y\to X=\hat{\mathbb{C}}^3\ni P$ be a weighted blow up with weights $w=(1,\alpha,\beta)$ and exceptional divisor $E_1$ where $\beta>1$. Let $S$ be a prime divisor of $X$ defined by a power series $f$. 
    Let $\bar{s}$ and $\bar{t}$ be positive integers with $\alpha \bar{t
}=\beta \bar{s}-1$ (resp. $s$ and $t$ positive integers with $\alpha t
=\beta s+1$ if $\alpha>1$). Define $m=w(f)$ and $m_3:=w_3(f)$ where $w_3=(1,\bar{s},\bar{t})$ (resp. $m_2:=w_2(f)$ where $w_2=(1,s,t)$). 
Suppose that $\frac{\bar{s}+\bar{t}}{\alpha+\beta}m\geq m_3$ (resp. $\frac{s+t}{\alpha+\beta}m\geq m_2$ if $\alpha>1$) and the proper transform $S_Y$ is non-singular on $Y$ near $E_1$ except probably the origin of $U_3$ (resp. $U_2$ if $\alpha>1$).
    Then $\ct_P(X,S)=\min\{\frac{\alpha+\beta}{m},1\}$.
\end{lem}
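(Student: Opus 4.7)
The strategy is to prove matching upper and lower bounds. For the upper bound, the exceptional divisor $E_1$ of $\mu_1$ has discrepancy $a(E_1;X)=\alpha+\beta$ and $\mathrm{ord}_{E_1}(S)=w(f)=m$, so $\ct_P(X,S)\le(\alpha+\beta)/m$; together with the trivial bound $\ct\le 1$ this produces $\ct_P(X,S)\le c:=\min\{(\alpha+\beta)/m,\,1\}$. The bulk of the work is the matching lower bound, namely showing that $(X,cS)$ is canonical. Since $K_Y+cS_Y=\mu_1^{*}(K_X+cS)+(\alpha+\beta-cm)E_1$ with $\alpha+\beta-cm\ge 0$ by the choice of $c$, it suffices to verify that $(Y,cS_Y)$ is sub-canonical, which I would check chart by chart on $Y=U_1\cup U_2\cup U_3$.

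On the smooth chart $U_1$ the hypothesis gives $S_Y$ smooth, and with $c\le 1$ the pair is canonical there. The charts $U_2\simeq\hat{\mathbb{C}}^3/\tfrac{1}{\alpha}(-s,s,1)$ and $U_3\simeq\hat{\mathbb{C}}^3/\tfrac{1}{\beta}(t,1,-t)$ are cyclic-quotient singular only at their respective origins $Q_2,Q_3$; at every other point of these charts $Y$ and $S_Y$ are both smooth, so the pair is again canonical. The remaining task is to verify canonicity at the two terminal quotient points.

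At $Q_3$, the plan is to invoke Kawamata's computation of canonical thresholds at terminal cyclic quotient singularities \cite[Lemma~7]{Ka}. Composing the Kawamata blow-up of $U_3$ at $Q_3$ with $\mu_1$ yields the weighted blow-up of $X$ with weights $w_3=(1,\bar s,\bar t)$; this follows from the identity $\tfrac{\bar t}{\beta}(1,\alpha,\beta)+\tfrac{1}{\beta}(t,1,0)=(1,\bar s,\bar t)$, which uses $t+\bar t=\beta$ and $\alpha\bar t=\beta\bar s-1$. Consequently the extracted divisor $F_3$ satisfies $a(F_3;X)=\bar s+\bar t$ and $\mathrm{ord}_{F_3}(f)=m_3$, so $(\bar s+\bar t)-c\,m_3\ge 0$ becomes, after inserting $c\le(\alpha+\beta)/m$, precisely the hypothesis $\tfrac{\bar s+\bar t}{\alpha+\beta}m\ge m_3$. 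When $\alpha>1$, the symmetric identity $\tfrac{s}{\alpha}(1,\alpha,\beta)+\tfrac{1}{\alpha}(\bar s,0,1)=(1,s,t)$ together with $\tfrac{s+t}{\alpha+\beta}m\ge m_2$ handles $Q_2$. The principal technical obstacle I expect is precisely this reduction at $Q_i$: a priori, canonicity there involves every divisorial valuation centered at the singular point, and it is Kawamata's Lemma~7, combined with the uniqueness of the Kawamata blow-up as a divisorial contraction to a terminal cyclic quotient point (\cite{Ka}, Main Theorem), that converts the infinite test into the single numerical inequality above under the smoothness hypothesis on $S_Y$ away from $Q_i$.
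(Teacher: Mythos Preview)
Your plan is correct and follows essentially the same route as the paper's proof: both reduce the question to Kawamata's analysis of canonical thresholds at the terminal cyclic quotient points $Q_2,Q_3$, using the smoothness hypothesis to dispose of all other points. The only difference is presentational. The paper unwinds Kawamata's argument explicitly: it builds a log-resolution tower $Y_n\to\cdots\to Y_3\to Y_2\to Y_1=Y\to X$ (with $\mu_2,\mu_3$ the Kawamata blow-ups at $Q_2,Q_3$), writes $K_{Y_n}=\sigma^*K_X+\sum a_jE_j$ and $\sigma^*S=S_{Y_n}+\sum m_jE_j$, and proves $a_j/m_j\ge a_1/m_1$ by induction, invoking \cite[Lemma~6]{Ka} for divisors whose center lies in $E_2$ or $E_3$ and the smoothness of $S_Y$ for the remaining ones. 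Your version treats Kawamata's computation as a black box and checks canonicity of $(Y,cS_Y)$ chart by chart; since $c=(\alpha+\beta)/m$ kills the coefficient of $E_1$ in $K_Y+cS_Y-\mu_1^*(K_X+cS)$, the single inequality $a(F_i;X,cS)\ge 0$ is indeed equivalent to $a(F_i;Y,cS_Y)\ge 0$, which by Kawamata is the full canonicity test at $Q_i$. One small point: the paper, like you, immediately reduces to the case $(\alpha+\beta)/m<1$, so neither proof spells out the boundary case $c=1$.
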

\begin{proof}The proof actually follows from \cite[Lemma 6 and argument of Lemma 7]{Ka}.

Since $\ct(X,S)\in [0,1]$, we may assume $\frac{\alpha+\beta}{m}<1$.  
Suppose that $\alpha>1$.
Recall that $Y$ is a union of three open charts $U_1\simeq \hat{\mathbb{C}}^3$, $U_2\simeq \mathbb{C}^3/\frac{1}{\alpha}(-1,1,-\beta)$ and $U_3\simeq \mathbb{C}^3/\frac{1}{\beta}(-1,-\alpha,1)$.  Let $\mu_2:Y_2\to Y$ be the Kawamata blow up (with weights $v_2=\frac{1}{\alpha}(\bar{s}, s, 1)$) at the origin of $U_2$ with exceptional divisor $E_2$. 
Let $\mu_3:Y_3\to Y_2$ be the Kawamata blow up at the origin of $U_3$ with weights $v_3=\frac{1}{\beta}(t,1,\bar{t})$ and exceptional divisor $E_3$.  Let $n\geq n_0\geq 1$ be integers such that $\mu_i:Y_i\to Y_{i-1}$ are Kawamata blow ups for $i=1,2,...,n_0$ (resp. usual blow up at smooth point or along smooth curve for $i=n_0+1,...,n$), $Y_{n_0}$ is non-singular and the composition $\sigma:=\mu_n\circ\cdots\circ\mu_2\circ\mu_1 \colon Y_n\to X$ is a log resolution of $(X,S)$ near $P\in X$. Note that we set $Y_1:=Y$ and $Y_0:=X$.
Write $K_{Y_n}=\sigma^*K_X+\sum_{j=1}^n a_j E_j$ and $\sigma^*S=S_{Y_n}+\sum_{j=1}^n m_j E_j$ with exceptional divisor $E_j$ of $\mu_j:Y_j\to Y_{j-1}$ where $m_1:=m=w(f)$. For every $j=1,...,n$, denote by $\sigma_j:=\mu_n\circ\cdots\circ\mu_{j+1}:Y_n\to Y_j$ the induced morphism and write $$K_{Y_j}=\mu_j^*K_{Y_{j-1}}+\bar{a}_jE_j,\ \  \mu_j^*S_{Y_{j-1}}=S_{Y_j}+\bar{m}_jE_j \textup{ and }\sigma_j^* E_j=E_j+\sum_{j<k} \alpha_{jk}E_k$$ for non-negative rational numbers $\alpha_{jk}$. Then we have $a_j=\bar{a}_j+\sum_{i<j}a_i \alpha_{ij}$ and $m_j=\bar{m}_j+\sum_{i<j} m_i \alpha_{ij}$. 

As $\mu_3:Y_3\to Y_2$ is a weighted blow up with weights $v_3=\frac{1}{\beta}(t,1,\bar{t})$ and $w_3=\frac{\bar{t}}{\beta}w+\frac{1}{\beta}(t,1,0)$, we see $\alpha_{13}=\frac{\bar{t}}{\beta}$ and $\bar{a}_3=\frac{1}{\beta}$. 
From  
$$\frac{\bar{s}+\bar{t}}{m_3}=\frac{a_3}{m_3}=\frac{\bar{a}_3+\alpha_{13}a_1}{\bar{m}_3+\alpha_{13}m}=\frac{\bar{a}_3+\alpha_{13}(\alpha+\beta)}{\bar{m}_3+\alpha_{13}m}$$ and $\alpha_{13}\geq 0$,
it is easy to see that the assumption $\frac{\bar{s}+\bar{t}}{\alpha+\beta}m\geq m_3$ is equivalent to $\frac{\bar{a}_3}{\bar{m}_3}\geq \frac{\alpha+\beta}{m}=\frac{a_1}{m_1}$.
Similarly, the assumption $\frac{s+t}{\alpha+\beta}m\geq m_2$ is equivalent to $\frac{\bar{a}_2}{\bar{m}_2}\geq \frac{\alpha+\beta}{m}=\frac{a_1}{m_1}$.

It remains to show $\frac{a_j}{m_j}\geq \frac{a_1}{m_1}$ for every $j=1,...,n$. It is obvious when $j=1$. 
From the assumptions  $\frac{\bar{s}+\bar{t}}{\alpha+\beta}m\geq m_3$ and $\frac{s+t}{\alpha+\beta}m\geq m_2$, we may assume that $j\geq 4$. By induction hypothesis, we assume that $\frac{a_k}{m_k}\geq \frac{a_1}{m_1}$ for $k=1,...,j-1$.
Denote by $\sigma_3(E_j)$ the center of $E_j$ on $Y_3$. 

\noindent 
{\bf Case 1:} $\sigma_3(E_j)$ is contained in $E_3$. 
By \cite[Lemma 6]{Ka}, one has $\frac{\bar{a}_j}{\bar{m}_j}\geq \frac{\bar{a}_3}{\bar{m}_3}$. Thus, we see $\frac{\bar{a}_j}{\bar{m}_j}\geq \frac{\bar{a}_3}{\bar{m}_3}\geq \frac{a_1}{m_1}$. In particular, we observe 
$$\frac{a_j}{m_j}=\frac{\bar{a}_j+\sum_{i<j}a_i \alpha_{ij}}{\bar{m}_j+\sum_{i<j} m_i \alpha_{ij}}\geq \frac{a_1+\sum_{i<j}a_i \alpha_{ij}}{m_1+\sum_{i<j}m_i \alpha_{ij}}\geq \frac{a_1+\sum_{i<j}a_1 \alpha_{ij}}{m_1+\sum_{i<j}m_1 \alpha_{ij}}=\frac{a_1}{m_1},$$ where the last inequality follows from induction hypothesis.

\noindent 
{\bf Case 2:} $\sigma_3(E_j)$ is contained in $E_2$. We may apply the same argument in Case 1 to obtain $\frac{a_j}{m_j}\geq \frac{a_1}{m_1}$.

\noindent 
{\bf Case 3:} $\sigma_3(E_j)$ is contained in $E_1$ and not contained in $E_2\cup E_3$. From the assumption that $S_Y$ is non-singular on $Y$ (near $E_1$) except the origins of $U_2$ and $U_3$, we see that $\bar{m}_j\in \{1,0\}$ and $\bar{a}_j=1$ or $2$. Thus $\frac{\bar{a}_j}{\bar{m}_j}=\bar{a}_j\geq 1 > \frac{\alpha+\beta}{m}=\frac{a_1}{m_1}$ or $\bar{m}_j=0$. In particular, $\frac{a_j}{m_j}\geq \frac{a_1}{m_1}$ by induction hypothesis.

Suppose that $\alpha=1$. Then $U_2\simeq \mathbb{C}^3$ is non-singular. Replacing $j$ by $j-1$ for $j\geq 3$ above. We divide it into two cases that $\sigma_2(E_j)$ is contained in $E_2$ or $E_1\setminus E_2$. The above arguments yield the same inequality $\frac{a_j}{m_j}\geq \frac{a_1}{m_1}$. We finish the proof of Lemma \ref{compute smct}.
\end{proof}

Let $f$ be a non-zero holomorphic function near $0 \in \CC ^2$. Recall that $c_{0}(\CC ^2,f):= \text{sup}\{c | \;|f|^{-c} \; \text{is locally $L ^2$ near $0$}\}$. Following Kuwata \cite{K99a} and \cite{K99b}, we define  $$\mathcal{HT}_2:=
\{ c_{0}(\CC^{2},f): f \neq 0   \text{ is holomorphic near $0\in \CC^{2}$} \}.$$

\begin{thm}\label{classfysmct}
The set $\mathcal{T}^{\textup{can}}_{3,\textup{sm}}$ consists of $C\cap [0,1]$ where $C$ is the following set
$$\left\{\frac{\alpha+\beta}{p_1\alpha+p_2\beta} \Bigg| \begin{array}{l} \alpha, \beta, p_2\in \mathbb{N} \textup{ and } p_1 \in \mathbb{Z}_{\ge 0 } \textup{ such that }\alpha\leq \beta,\ \\ \gcd(\alpha,\beta)=1 \textup{ and either } p_2\geq \max\{\alpha, p_1\} \textup{ or }p_2=p_1  \end{array} \right\}.$$ In particular, $$\mathcal{T}^{\textup{can}}_{3,\textup{sm}}=\mathcal{T}_{2,\textup{sm}}^{\textup{lc}} (= \mathcal{HT}_2 ).$$
\end{thm}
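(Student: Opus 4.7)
The plan is to split the desired equality $\mathcal{T}^{\textup{can}}_{3,\textup{sm}} = C \cap [0,1]$ into the two inclusions and then verify $C \cap [0,1] = \mathcal{HT}_2$ by a direct comparison with Kuwata's classification in \cite{K99a, K99b}.

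For the inclusion $\mathcal{T}^{\textup{can}}_{3,\textup{sm}} \subseteq C \cap [0,1]$, I would start from a canonical threshold $\ct(X,S)$ on a smooth threefold and invoke Kawakita's classification \cite{Kawakita01}: such a threshold is computed by a weighted blow up $\sigma\colon Y \to X = \hat{\mathbb{C}}^3$ at the origin with weights $w = (1,\alpha,\beta)$, $\gcd(\alpha,\beta) = 1$ and (after relabeling) $\alpha \le \beta$. This already gives $\ct(X,S) = \frac{\alpha+\beta}{m}$ with $m = w(f)$. The case $\alpha = 1$ is handled directly; otherwise introduce the companion weights $w_2 = (1,s,t)$ and $w_3 = (1,\bar{s},\bar{t})$ governed by $\alpha t - \beta s = 1$ and $\beta \bar{s} - \alpha \bar{t} = 1$, which come from the Kawamata blow ups at the singular origins of the charts $U_2$ and $U_3$. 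Applying \cite[Lemma 2.1]{3ct} to the pairs $(w, w_2)$ and $(w, w_3)$ yields the floor-ceiling inequalities (\ref{SNI1}), and combining with the bound $m \ge \alpha\beta$ from \cite[Proposition 3.3]{3ct} lets me write $m = p_1\alpha + p_2\beta$ with $p_1 \in \mathbb{Z}_{\ge 0}$, $0 \le p_1 < \beta$ and $p_2 \in \mathbb{N}$. Substituting this expression into (\ref{SNI1}) and simplifying with the Bezout identities gives $p_2 \ge p_1$, together with $p_2 \ge \alpha$ whenever $p_1 \ne p_2$, which is exactly the condition defining $C$.

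For the reverse inclusion $C \cap [0,1] \subseteq \mathcal{T}^{\textup{can}}_{3,\textup{sm}}$, given $\frac{\alpha+\beta}{p_1\alpha + p_2\beta} \in C \cap [0,1]$ I would construct an explicit convergent power series $f$ so that Lemma \ref{compute smct} can be applied. The aim is to pick $f$ of weighted multiplicity $w(f) = m := p_1\alpha + p_2\beta$ satisfying (i) the proper transform $S_Y$ is smooth on $Y$ near $E_1$ away from the two singular origins of $U_2$ and $U_3$, equivalently $f|_{E_1}$ cuts out a smooth curve in $E_1 \simeq \mathbb{P}(1,\alpha,\beta)$ outside those two points, and (ii) $w_2(f) \le \frac{s+t}{\alpha+\beta} m$ and $w_3(f) \le \frac{\bar{s}+\bar{t}}{\alpha+\beta} m$. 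A natural candidate uses $y^{p_2} z^{p_1}$ as the main monomial, supplemented by a $w$-homogeneous correction term (e.g.\ involving $x$ to ``fix'' smoothness at the two special points when needed); the combinatorial dichotomy defining $C$ is exactly what makes (i) and (ii) simultaneously achievable. Once $f$ is constructed, Lemma \ref{compute smct} yields $\ct(X,S) = \frac{\alpha+\beta}{m}$.

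Finally, $C \cap [0,1] = \mathcal{HT}_2$ should follow from a direct comparison of parametrizations: Kuwata's tables in \cite{K99a, K99b} express every element of $\mathcal{HT}_2$ in the form $\frac{\alpha+\beta}{p_1\alpha+p_2\beta}$ with the same arithmetic restrictions on $(\alpha,\beta,p_1,p_2)$ that cut out $C$, and conversely every such number is realized by an explicit $f \in \mathbb{C}\{x,y\}$. The hardest step will be the explicit construction in the reverse inclusion: controlling three distinct weighted multiplicities $w(f), w_2(f), w_3(f)$ of a single $f$ while preserving smoothness of $S_Y$ outside two prescribed points is delicate, and the precise combinatorial condition on $(p_1,p_2)$ only emerges after a Newton-polygon case analysis relative to both $w_2$ and $w_3$, split along the dichotomy $p_1 = p_2$ versus $p_2 \ge \max\{\alpha,p_1\}$.
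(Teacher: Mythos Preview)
Your proposal is correct and follows essentially the same approach as the paper's proof. One small slip: the leading monomial should be $y^{p_1}z^{p_2}$ rather than $y^{p_2}z^{p_1}$ (so that its $w$-weight is $p_1\alpha+p_2\beta=m$), and the paper's explicit construction splits into three cases according to whether $\alpha\mid p_2$ and whether $p_1=p_2$, using auxiliary monomials $x^m,\,y^m,\,z^m$ and $y^{\lfloor p_2/\alpha\rfloor\beta+p_1}z^{q}$ (with $q=p_2-\alpha\lfloor p_2/\alpha\rfloor$) to guarantee smoothness of $S_Y$ in each chart before invoking Lemma~\ref{compute smct}.
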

\begin{proof}

We first show  $\mathcal{T}_{3,\textup{sm}}^{\textup{can}} \subseteq C\cap [0,1]$.
Suppose that $\ct(X,S)\in \mathcal{T}_{3,\textup{sm}}^{\textup{can}}$. By \cite[(2.10) Proposition-definition]{Corti} and \cite{Kawakita01}, the canonical threshold  $\ct(X,S)=\frac{\alpha+\beta}{m}$ is realized by a weighted blow up $\sigma: Y \to X=\hat{\bC}^3$ with weights $w=(1,\alpha,\beta)$ where $\alpha$ and $\beta$ are relative prime integers with $1\leq \alpha\leq \beta$ and $m$ is the weighted multiplicity of $S$ with respect to $\sigma$.

In the case $\alpha=\beta=1$, every positive integer is of the form $p_1\alpha+p_2\beta$ for some non-negative integers $p_1$ and $p_2$. 
Thus, we may assume that $\alpha<\beta$. 
As $\gcd(\alpha,\beta)=1$, there exists positive integers $\bar{s}$ and $\bar{t}$ with $\alpha \bar{t
}=\beta \bar{s}-1$ (resp, positive integers $s$ and $t$ with $\alpha t
=\beta s+1$ if $\alpha>1$). Define $w_3=(1,\bar{s},\bar{t})$ and $\sigma_3$ to be the weighted blow up at the origin of $X=\hat{\mathbb{C}}^3$ with the weights $w_3$. Similarly, define $w_2=(1,s,t)$ and $\sigma_2$ to be the weighted blow up at the origin of $X=\hat{\mathbb{C}}^3$ with the weights $w_2$ if $\alpha>1$. 
Let $m_3$ (resp. $m_2$ if $\alpha>1$) denote the weighted multiplicity of $S$ with respect to $\sigma_3$ (resp. $\sigma_2$ if $\alpha>1$). From \cite[Lemma 2.1]{3ct}, one has 
$$ \lfloor  \frac{\bar{s}+\bar{t}}{\alpha+\beta} m \rfloor \ge m_3 \ge    \lceil \frac{\bar{t}}{\beta} m \rceil.$$
Similarly, in the case $\alpha>1$, we have 
$$\lfloor  \frac{s+t}{\alpha+\beta} m \rfloor   \ge m_2 \ge   \lceil \frac{s}{\alpha} m \rceil.$$

By \cite[Proposition 3.3]{3ct} and $\ct(X,S)\leq 1$, we see $m\geq \alpha\beta$. As $m$ is an integer greater than $\alpha\beta-\alpha-\beta$, one may write $m=p_1\alpha+p_2\beta$ for some non-negative integers $p_1$ and $p_2$. Rewriting  $$m=p_1\alpha+p_2\beta=(p_1-\beta\lfloor \frac{p_1}{\beta}\rfloor)\alpha+(p_2+\lfloor \frac{p_1}{\beta}\rfloor \alpha)\beta,$$ one may assume that $p_1<\beta$. It remains to show the following two claims.
\begin{claim}\label{p2gp1}
 We have  $p_2\ge p_1$.
\end{claim}
\begin{proof}[Proof of the Claim]
Since $\alpha \bar{t}=\beta\bar{s}-1$ and we assume that $p_1<\beta$, one has 
$$\lceil \frac{\bar{t}}{\beta} m \rceil=\lceil \frac{\bar{t}}{\beta} (p_1\alpha+p_2\beta) \rceil= \lceil p_1\bar{s}+p_2\bar{t}- \frac{p_1}{\beta}\rceil=p_1\bar{s}+p_2\bar{t}.$$
As $\lfloor\frac{\bar{s}+\bar{t}}{\alpha+\beta}m\rfloor\ge m_3\ge  \lceil \frac{\bar{t}}{\beta} m \rceil$ where 
$$\lfloor \frac{\bar{s}+\bar{t}}{\alpha+\beta}m\rfloor=\lfloor \frac{\bar{s}+\bar{t}}{\alpha+\beta}(p_1\alpha+p_2\beta)\rfloor=p_1\bar{s}+p_2\bar{t}+\lfloor \frac{p_2-p_1}{\alpha+\beta}\rfloor, $$  
we conclude $p_2\geq p_1$.    
\end{proof}

\begin{claim}
 Either  $p_2\ge \alpha$ or $p_1=p_2$.
\end{claim}
\begin{proof}[Proof of the Claim]
The argument is similar to the claim above. We may assume that $\alpha>1$. 
Since $\alpha t=\beta s+1$, we have 
\begin{align*}
\lfloor\frac{s+t}{\alpha+\beta}m\rfloor&=\lfloor\frac{s+t}{\alpha+\beta}(p_1\alpha+p_2\beta)\rfloor= p_1s+p_2t-\lceil \frac{p_2-p_1}{\alpha+\beta} \rceil \\ 
\textup{ and \ \ } \lceil \frac{s}{\alpha} m \rceil&=\lceil \frac{s}{\alpha} (p_1\alpha+p_2\beta) \rceil=p_1s+p_2t-\lfloor \frac{p_2}{\alpha} \rfloor.
\end{align*}
As we have $\lfloor  \frac{s+t}{\alpha+\beta} m \rfloor  \ge m_2 \ge   \lceil \frac{s}{\alpha} m \rceil,$
one sees $$ \frac{p_2}{\alpha}\geq\lfloor\frac{p_2}{\alpha} \rfloor\ge \lceil \frac{p_2-p_1}{\alpha+\beta} \rceil\ge 0,$$ where last inequality follows from Claim \ref{p2gp1} above.
Thus, we conclude $p_2\geq \alpha$ or $p_1=p_2$. 
\end{proof}

Conversely, suppose that we are given $\alpha, \beta, p_2\in \mathbb{N}$ and $p_1 \in \mathbb{Z}_{\ge 0 }$ with  $\alpha\leq \beta,\ \gcd(\alpha,\beta)=1$ satisfying either $p_2\geq \max\{\alpha,p_1\}$ or $p_1=p_2$. We shall show that $\frac{\alpha+\beta}{p_1\alpha+p_2\beta}\in \mathcal{T}^{\textup{can}}_{3,\textup{sm}}$ if $\frac{\alpha+\beta}{p_1\alpha+p_2\beta}\leq 1$. 


 If $\alpha=\beta=1$, we take $S=(x^m+y^m+z^m=0)$ and $\sigma:Y\to X=\hat{\mathbb{C}}^3$ to be the smooth blow up at the origin where $m$ is a positive integer. As $(Y, S_Y)$ is log smooth for $m\geq 2$ (resp. $(\hat{\mathbb{C}}^3,S)$ is log smooth when $m=1$), we see $\ct(X,S)=\min\{\frac{2}{m},1\}$ by \cite[Theorem 3.6]{Stepanov}. 

From now on, we may assume $\beta>1$.
Let $\sigma\colon Y\to X=\hat{\mathbb{C}}^3$ be the weighted blow up with weights $w=(1,\alpha,\beta)$. Define $\bar{s}$ and $ \bar{t}$ to be positive integers with $\alpha \bar{t}=\beta\bar{s}-1$. Similarly, in the case $\alpha>1$, we define positive integers $s=\alpha-\bar{s}$ and $t=\beta-\bar{t}$ so we have $\alpha t=\beta s+1$. 
Note that $Y$ is covered by three affine open subsets $U_1, U_2, U_3$ where $U_1\simeq \hat{\mathbb{C}}^3$ and
 \begin{align*}
U_3&\simeq \hat{\mathbb{C}}^3/\frac{1}{\beta}(-1,-\alpha, 1)\simeq \hat{\mathbb{C}}^3/\frac{1}{\beta}(t,1, -t) \textup{ and } \\ U_2&\simeq \hat{\mathbb{C}}^3/\frac{1}{\alpha}(-1,1, -\beta)\simeq \hat{\mathbb{C}}^3/\frac{1}{\alpha}(-s,s, 1) \textup{ if }\alpha>1.
\end{align*}
Define $$v_2=\frac{1}{\alpha}(\bar{s}, s, 1),\ v_3=\frac{1}{\beta}(t,1, \bar{t}),\ w_2=\frac{s}{\alpha}w+\frac{1}{\alpha}(\bar{s},0,1), w_3= \frac{\bar{t}}{\beta}w+\frac{1}{\beta}(t,1,0).$$
Then $w_2=(1,s,t)$, $w_3=(1, \bar{s}, \bar{t})$ 
and the weighted blow up $\sigma_3':Z_3\to U_3$ (resp. $\sigma_2':Z_2\to U_2$) at the origin of $U_3$ (resp. $U_2$) with weights $v_3$ (resp. $v_2$ if $\alpha>1$) is Kawamata blow up. 

Define $m:=p_1\alpha+p_2\beta$ and express $$p_2=\alpha\lfloor \frac{p_2}{\alpha}\rfloor +q$$ where $q\in [0,\alpha-1]$ is an integer.

\noindent 
\textbf{Case 1:  $\alpha| p_2$}. 
Define $S$ to be the Cartier divisor $f=0$ in $\hat{\mathbb{C}}^3$ where $$f=x^{m}+y^{p_1}z^{p_2}+y^{\frac{p_2}{\alpha}\beta+p_1}+z^{m}.$$ 
Note that $m=w(f)$, and $$w_3(f)=\min\{w_3(x^m),w_3(y^{p_1}z^{p_2}),w_3(y^{\frac{p_2}{\alpha}\beta+p_1}), w_3(z^{m})\}=w_3(y^{p_1}z^{p_2})=p_1\bar{s}+p_2\bar{t}$$ and 
$S$ is only singular at the origin if $p_1+p_2\geq 2$. In particular, the divisor $S$ is prime.  
In the open chart $U_1\simeq \hat{\mathbb{C}}^3$ of $Y$, the proper transform $S_Y$ of $S$ in $Y$ is defined by $$1+y^{p_1}z^{p_2}+y^{\frac{p_2}{\alpha}\beta+p_1}+x^{(\beta-1)m}z^{m}=0,$$ which is non-singular. 
Similarly, in the open chart $U_2\simeq \hat{\mathbb{C}}^3/\frac{1}{\alpha}(-1,1,-\beta)$ if $\alpha>1$ (resp. $U_2\simeq \hat{\mathbb{C}}^3$ if $\alpha=1$), $S_Y$ is defined by $$x^{m}+z^{p_2}+1+y^{(\beta-1)m}z^{m}=0,$$ which is non-singular.
In the open chart $U_3\simeq \hat{\mathbb{C}}^3/\frac{1}{\beta}(-1,-\alpha,1)$, $S_Y$ is defined by $$x^{m}+y^{p_1}+y^{\frac{p_2}{\alpha}\beta+p_1}+z^{(\beta-1)m}=0,$$ which is only singular at the origin if $p_1>0$ (resp. is nonsingular if $p_1=0$). 
If $p_1>0$, then the weighted multiplicity of $S_Y$ with respect to Kawamata blow up $\sigma_3':Z_3\to U_3$ is $$\beta\cdot v_3(x^{m}+y^{p_1}+y^{\frac{p_2}{\alpha}\beta+p_1}+z^{(\beta-1)m})=p_1.$$ By assumption that $p_2\geq p_1$, we have $$\frac{\alpha+\beta}{p_1\alpha+p_2\beta}\leq \frac{1}{p_1},$$
where $\frac{1}{p_1}$ is the canonical threshold of $(U_3,S_Y|_{U_3})$ near the origin of $U_3$ if $p_1>0$ by \cite{Ka}.
Note that $$\bar{s}+\bar{t}=\frac{1}{\beta}+\frac{\bar{t}}{\beta}(\alpha+\beta)\textup{ and } w_3(f)=p_1\bar{s}+p_2\bar{t}=\frac{p_1}{\beta}+\frac{\bar{t}}{\beta}(p_1\alpha+p_2\beta).$$
The above inequality $\frac{\alpha+\beta}{p_1\alpha+p_2\beta}\leq \frac{1}{p_1}$ gives $\frac{\alpha+\beta}{p_1\alpha+p_2\beta}\leq \frac{\bar{s}+\bar{t}}{w_3(f)}$.
It is then easy to see $\ct(\hat{\mathbb{C}}^3,S)=\min\{\frac{\alpha+\beta}{p_1\alpha+p_2\beta},1\}$ by Lemma \ref{compute smct}.  

\noindent 
\textbf{Case 2: $\alpha\nmid p_2$ and $p_1\neq p_2$}. 
Define $S$ to be the Cartier divisor $f=0$ in $\hat{\mathbb{C}}^3$ where $$f=x^{m}+y^{p_1}z^{p_2}+y^{\lfloor\frac{p_2}{\alpha}\rfloor \beta +p_1}z^{q}+y^{m}+z^{m}.$$ It's not hard to see that $S$ has only isolated singularities, say $Q_1,...,Q_n$. In particular, the divisor $S$ is prime. Without loss of generality, we assume that $Q_1$ is the origin of $X=\hat{\mathbb{C}}^3$. Note that $m=w(f)$ and $$ w_2(f)= w_2(y^{\lfloor\frac{p_2}{\alpha}\rfloor \beta +p_1}z^{q})=s(\lfloor\frac{p_2}{\alpha}\rfloor \beta +p_1)+tq \textup{ and }w_3(f)=w_3(y^{p_1}z^{p_2})=p_1\bar{s}+p_2\bar{t}.$$ 
As the above computation, outside the singular set $\{Q_2,...,Q_n\}$, the proper transform $S_Y$ of $S$ in $Y$ is non-singular in $U_1$ and is only singular at the origin $U_3$ (resp. $U_2$), where we have $$
\frac{\alpha+\beta}{p_1\alpha+p_2\beta}\leq \frac{\bar{s}+\bar{t}}{w_3(f)}.$$ In the open chart $U_2\simeq \hat{\mathbb{C}}^3/\frac{1}{\alpha}(-1,1,-\beta)$, $S_Y$ is defined by 
$$x^{m}+z^{p_2}+z^q+y^{(\alpha-1)m}+y^{(\beta-1)m}z^{m}=0,$$ which is only singular at the origin of $U_2$ and $Q_2,...,Q_n$.
Then the weighted multiplicity of $S_Y$ with respect to Kawamata blow up $\sigma_2'\colon Z_2\to U_2$ is $$\alpha\cdot v_2(x^{m}+z^{p_2}+z^q+y^{(\alpha-1)m}+y^{(\beta-1)m}z^{m})=\alpha\cdot v_2(z^q)=q,$$ where $v_2=\frac{1}{\alpha}(\alpha-s,s,1)$ and $q\in (0,\alpha-1]$ is an integer. 
Recall that $p_2-q=\alpha\lfloor \frac{p_2}{\alpha}\rfloor$ is a multiple of $\alpha$. As $p_1\neq p_2$, we have $p_2\geq \max\{\alpha,p_1\}$ by assumption. In particular, $p_2-q=\alpha\lfloor\frac{p_2}{\alpha}
 \rfloor\ge \alpha$ and 
$$ (p_2-q)\beta+p_1\alpha\geq \alpha \beta+p_1\alpha>q\alpha \textup{ and thus }\frac{\alpha+\beta}{p_1\alpha+p_2\beta}< \frac{1}{q},$$ 
where $\frac{1}{q}$ 
is the canonical threshold of $(U_2,S_Y|_{U_2})$ 
near the origin of $U_2$ 
by \cite{Ka}.
Note that $$s+t=\frac{1}{\alpha}+\frac{s}{\alpha}(\alpha+\beta)\textup{ and } w_2(f)=s(\lfloor\frac{p_2}{\alpha}\rfloor \beta +p_1)+tq=\frac{q}{\alpha}+\frac{s}{\alpha}(p_1\alpha+p_2\beta),$$
and the inequality $\frac{\alpha+\beta}{p_1\alpha+p_2\beta}< \frac{1}{q}$ implies $\frac{\alpha+\beta}{p_1\alpha+p_2\beta}< \frac{s+t}{w_2(f)}$.
It is then easy to see $\ct(\hat{\mathbb{C}}^3,S)=\frac{\alpha+\beta}{p_1\alpha+p_2\beta}$ by Lemma \ref{compute smct}. 

\noindent 
\textbf{Case 3: $\alpha\nmid p_2$ and $p_1= p_2$}. 
Define $S$ to be the Cartier divisor $f=0$ in $\hat{\mathbb{C}}^3$ where $$f=x^{m}+y^{p_2}z^{p_2}+y^{m}+z^{m}.$$ Similarly, $S$ is only singular at the origin and thus is prime. Note that   $$m=w(f),\ w_2(f)=w_2(y^{p_2}z^{p_2})=(s+t)p_2\text{ and }w_3(f)=w_3(y^{p_2}z^{p_2})=(\bar{s}+\bar{t})p_2.$$ 
Then the proper transform $S_Y$ is non-singular in the open chart $U_1$ and is only singular at the origin of $U_2$ (resp. $U_3$). As $p_1=p_2$, we have $\frac{\alpha+\beta}{p_1\alpha+p_2\beta}=\frac{1}{p_2}$ where $\frac{1}{p_2}$ is the canonical threshold of $(U_3,S')$ (resp. $(U_2,S')$) near the origin by \cite{Ka}. It is then easy to see $\ct(\hat{\mathbb{C}}^3,S)=\frac{\alpha+\beta}{p_2\alpha+p_2\beta}=\frac{1}{p_2}$ by Lemma \ref{compute smct}.
Thus, we have proved $\mathcal{T}^{\textup{can}}_{3,\textup{sm}}=C\cap [0,1]$. 

It remains to show $C\cap [0,1]=\mathcal{HT}_2$. Recall that $\mathcal{HT}_2$ consists of $1$ and $$\frac{c_1+c_2}{c_1c_2+a_1c_2+a_2c_1}$$ for some non-negative integers $a_1,a_2,c_1,c_2$ with $a_1+c_1\geq \max\{2,a_2\}$ and $a_2+c_2\geq \max\{2,a_1\}$ by \cite[(3.2)]{Kol08} (cf. \cite[Theorem 7.1]{K99a}).  

Suppose first that $1> \frac{\alpha+\beta}{p_1\alpha+p_2\beta}\in \mathcal{T}^{\textup{can}}_{3,\textup{sm}}$. 
If $\alpha+\beta$ divides $p_1\alpha+p_2\beta$, we put $c_1=c_2=1, a_1=\frac{p_1\alpha+p_2\beta}{\alpha+\beta}, a_2=a_1-1\geq 1$. One sees  $\frac{\alpha+\beta}{p_1\alpha+p_2\beta}=\frac{1+1}{1+a_1+(a_1-1)}\in \mathcal{HT}_2$. 
If $\alpha+\beta$ does not divide $p_1\alpha+p_2\beta$, one has $p_2>p_1$ and $p_2\geq \alpha$. Let $l$ be the non-negative integer with $$(\alpha+\beta)l<p_2-p_1< (\alpha+\beta)(l+1).$$ We put $c_1=\alpha, c_2=\beta, a_1=p_2-\alpha l-\alpha$ and $a_2=p_1+\beta l$. 
It is easy to see $a_1+c_1\geq \max\{2,a_2\}$ and $a_2+c_2\geq \max\{2,a_1\}$ and thus $\frac{\alpha+\beta}{p_1\alpha+p_2\beta}\in \mathcal{HT}_2$.

Conversely, suppose that $0\neq \frac{c_1+c_2}{c_1c_2+a_1c_2+a_2c_1}\in \mathcal{HT}_2$. If $c_i=0$ for some $i=1,2$, we put $p_1=p_2=a_i$. Then $\frac{c_1+c_2}{c_1c_2+a_1c_2+a_2c_1}=\frac{1}{a_i}\in \mathcal{T}^{\textup{can}}_{3,\textup{sm}}$. Thus we may assume that both $c_1$ and $c_2$ are positive. Let $d=\gcd(c_1,c_2)$ and $\alpha=\frac{c_1}{d}$ and $\beta=\frac{c_2}{d}$. We put $\alpha=c_1, \beta=c_2, p_1=a_2$ and $p_2=c_1+a_1$. Then we see $\frac{c_1+c_2}{c_1c_2+a_1c_2+a_2c_1}=\frac{\alpha+\beta}{p_1\alpha+p_2\beta}\in \mathcal{T}^{\textup{can}}_{3,\textup{sm}}$. This completes the proof of Theorem \ref{classfysmct}.

\end{proof}

\begin{rem}\label{quickremark1}
    Suppose that $\alpha$ and $\beta$ are two relative prime positive integers with $\alpha<\beta$. 
    If $m$ is an integer greater than $(\beta-1)\alpha+(\beta-2)\beta$, then $\frac{\alpha+\beta}{m}\in \mathcal{T}^{\textup{can}}_{3,\textup{sm}}$. Indeed, 
    as $(\beta-1)\alpha+(\beta-2)\beta\geq \alpha\beta-\alpha-\beta$, there exist non-negative integers $p_1$ and $p_2$ satisfying $m=p_1\alpha+p_2\beta$ and $p_1< \beta$. We have 
    $$
    p_2\beta=m-p_1\alpha>(\beta-1)\alpha+(\beta-2)\beta-p_1\alpha\geq \beta(\alpha+\beta-2)-\beta\alpha=(\beta-2)\beta,
    $$ whence $p_2\geq \beta-1\geq \max\{p_1,\alpha\}$. Therefore, $\frac{\alpha+\beta}{m}\in \mathcal{T}^{\textup{can}}_{3,\textup{sm}}$ by Theorem  \ref{classfysmct}. In particular, $\frac{3}{m} \in \mathcal{T}^{\textup{can}}_{3,\textup{sm}}$ (resp. $\frac{4}{m} \in \mathcal{T}^{\textup{can}}_{3,\textup{sm}}$) when $m\geq 3$ (resp. $m\geq 6$). It is also easy to see   $\frac{2}{m}=\ct(\hat{\bC}^3,S)\in \mathcal{T}^{\textup{can}}_{3,\textup{sm}}$ by considering $S:=\{ x^m+y^m+z^m=0\}$ in $\mathbb{C}^3$ (see \cite[Theorem 3.6]{Stepanov}) when $m\ge 2$. 
    Therefore, every number in $\aleph_4$ with the exception of $\frac{4}{5}$ is contained in $\mathcal{T}^{\textup{can}}_{3,\textup{sm}}(= C \cap [0,1])$. Recall that $\frac{4}{5}$ is a canonical threshold indicated in \cite[Example 3.11]{Prok08}. We can rewrite decomposition in (2) as

$$\mathcal{T}_{3}^{\textup{can}} = \{0\} \cup \{\frac{4}{5}\} \cup \mathcal{T}_{3,sm}^{\textup{can}} \cup \mathcal{T}_{3, cA,\geq 5}^{\textup{can}} \cup \mathcal{T}_{3, cA/m,\geq 5}^{\textup{can}} \cup \mathcal{T}_{3,cD,\geq 5}^{\textup{can}} \cup \mathcal{T}_{3, cD/2,\geq 5}^{\textup{can}} \eqno{(3).}$$

\end{rem}

\vspace{0.3cm}

\begin{section}{Classification of  $\mathcal{T}^{\textup{can}}_{3}$}
\end{section}
In this section, we shall establish the inclusion $\mathcal{T}_{3, *,\geq 5}^{\textup{can}}\subseteq C\cap [0,\frac{4}{5}]$ for each $*= cA, cA/n, cD$ and $cD/2$ where $$C=\left\{\frac{\alpha+\beta}{p_1\alpha+p_2\beta} \Bigg| \begin{array}{l} \alpha, \beta, p_2\in \mathbb{N} \textup{ and } p_1 \in \mathbb{Z}_{\ge 0 } \textup{ such that }\alpha\leq \beta,\ \\ \gcd(\alpha,\beta)=1 \textup{ and either } p_2\geq \max\{\alpha, p_1\} \textup{ or }p_2=p_1  \end{array} \right\}$$ in Propositions \ref{possible 3ctcA}, \ref{possible 3ctcA/n new}, \ref{classfycDct} and \ref{classfycD2ct}. The arguments are based on the idea of the inclusion $\mathcal{T}_{3, \textup{sm}}^{\textup{can}}\subseteq C\cap [0,1]$ in the proof of Theorem \ref{classfysmct} and reducing to coprime weights. Moreover, we will need to consider semi-invariant conditions in the non-Gorenstein cases in Propositions \ref{possible 3ctcA/n new} and \ref{classfycD2ct}.
\vspace{0.5cm}
\begin{prop} \label{possible 3ctcA}
We have $\mathcal{T}_{3, cA,\geq 5}^{\textup{can}}\subseteq C\cap [0,\frac{4}{5}]$.
\end{prop}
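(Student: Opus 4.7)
The plan is to adapt the proof of the inclusion $\mathcal{T}_{3,\textup{sm}}^{\textup{can}}\subseteq C\cap [0,1]$ from Theorem \ref{classfysmct} to the $cA$ hypersurface setting, with Kawakita's classification of $cA$ divisorial contractions as the main structural input.

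First I would invoke Kawakita's classification \cite{Kawakita05}: embedding $(P\in X)\hookrightarrow (0\in \hat{\mathbb{C}}^4)$ as a hypersurface $\phi=x_1x_2+g(x_3,x_4)=0$, the contraction $\sigma\colon Y\to X$ computing $\ct(X,S)$ is (after a change of coordinates) the restriction of a weighted blow up of $\hat{\mathbb{C}}^4$ with weights $w=(r_1,r_2,\alpha,\beta)$ satisfying $r_1+r_2=w(g)$ and $\gcd(\alpha,\beta)=1$. The weighted discrepancy is then $a=\alpha+\beta\ge 5$, and $\ct(X,S)=\frac{\alpha+\beta}{m}$ where $m=w(f)$ for a defining equation $f$ of $S$. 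The upper bound $\ct(X,S)\le \frac{4}{5}$ is Prokhorov's \cite{Prok08}, since $P\in X$ is singular.

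Next, I would mirror the smooth case by constructing two auxiliary weights $w_2$ and $w_3$ coming from Kawamata blow ups at the terminal cyclic quotient singular points of $Y$ lying over $P$. These points arise from the two charts corresponding to the last two weights $\alpha,\beta$ and have types of the form $\frac{1}{\alpha}(-s,s,1)$ and $\frac{1}{\beta}(t,1,-t)$ on the relevant induced coordinates, where the positive integers $s,t,\bar s=\alpha-s,\bar t=\beta-t$ satisfy $\alpha t=\beta s+1$ and $\alpha\bar t=\beta\bar s-1$ exactly as in Theorem \ref{classfysmct}. Combining \cite[Lemma 2.1]{3ct} with the $cA$ analog of \cite[Proposition 3.3]{3ct} (yielding $m\ge \alpha\beta$, and hence a decomposition $m=p_1\alpha+p_2\beta$ with $0\le p_1<\beta$), I would derive the inequalities
\[
\lfloor \tfrac{s+t}{\alpha+\beta}m\rfloor\ge \lceil \tfrac{s}{\alpha}m\rceil,\qquad \lfloor \tfrac{\bar s+\bar t}{\alpha+\beta}m\rfloor\ge \lceil \tfrac{\bar t}{\beta}m\rceil,
\]
which unpack exactly as in Claim \ref{p2gp1} and the subsequent claim of Theorem \ref{classfysmct} to give $p_2\ge p_1$ and ($p_2\ge \alpha$ or $p_1=p_2$). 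This places $\ct(X,S)\in C\cap [0,\tfrac{4}{5}]$.

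The main obstacle is that $\sigma$ is realized as a weighted blow up on the ambient $\hat{\mathbb{C}}^4$ rather than on $\hat{\mathbb{C}}^3$ directly, so I need to check carefully that the singular points of $Y$ along $E$ are genuine terminal cyclic quotients of the expected types: this requires inspecting the restriction of the ambient weighted blow up to $\{\phi=0\}$, and exploiting the relation $r_1+r_2=w(g)$ to see that the defining equation of $Y$ in each chart takes the normal form giving the quotient types $\frac{1}{\alpha}(-s,s,1)$ and $\frac{1}{\beta}(t,1,-t)$. A secondary concern is verifying the $cA$ analog of $m\ge \alpha\beta$ together with handling degenerate boundary cases (for example $\alpha=1$, or coincidences among $r_1,r_2,\alpha,\beta$), which should reduce directly to the inequalities already established in Theorem \ref{classfysmct}.
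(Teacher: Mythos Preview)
Your proposal rests on an incorrect reading of Kawakita's classification for $cA$ points. The divisorial contraction is a weighted blow up of $(xy+g(z,u)=0)\subset \hat{\mathbb{C}}^4$ with weights $w=(r_1,r_2,a,1)$, where $r_1+r_2=ad$, $z^d\in g$, and $\gcd(r_i,a)=1$; the weighted discrepancy is $a$ itself, not a sum $\alpha+\beta$ of the last two weights. In particular the fourth weight is $1$, so that chart is smooth, and the terminal cyclic quotient singularities of $Y$ over $P$ lie in the \emph{first two} charts, of types $\frac{1}{r_1}(\cdots)$ and $\frac{1}{r_2}(\cdots)$. Thus the analogy with the smooth case has $(r_1,r_2)$ playing the role of $(\alpha,\beta)$, not the last two weights; your proposed auxiliary weights $w_2,w_3$ from the ``$\alpha$'' and ``$\beta$'' charts do not exist, and the inequalities you write down are not the ones the geometry produces.

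The paper's proof follows exactly this corrected analogy: it builds auxiliary weights $w_1,w_2$ from the Kawamata blow ups at the $\frac{1}{r_1}$ and $\frac{1}{r_2}$ quotient points, obtaining inequalities of the shape $\lfloor \frac{a_i}{a}m\rfloor\ge \lceil \frac{r_j-s_j^*}{r_j}m\rceil$ where $1+a_ir_i=as_i^*$. A genuine new difficulty, absent from your outline, is that $\gcd(r_1,r_2)$ need not be $1$; the paper handles this with an explicit reduction step (Claim \ref{cA reduce coprime}) passing to $r_i'=r_i/\gcd(r_1,r_2)$ before writing $d'm=p_1r_1'+p_2r_2'$ and extracting $p_2\ge p_1$ and $p_2\ge r_1'$ (or $p_1=p_2$). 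The lower bound you need is $dm\ge r_1r_2$ (from \cite[Proposition 4.2]{3ct}), not $m\ge \alpha\beta$.
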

\begin{proof}
Let $\ct(X,S) \in \mathcal{T}_{3, cA}^{\textup{can}}$ be a canonical threshold. We have $\ct(X,S)\leq \frac{4}{5}$ by \cite{Prok08}. By \cite[(2.10) Proposition-definition]{Corti} and Theorem 1.2(i) in \cite{Kawakita05}, there exists an analytical identification $P\in X \simeq o\in (\varphi=xy+g(z,u)=0)$ in $\hat{\bC}^4$ where $o$ denotes the origin of $\hat{\bC}^4$ such that the canonical threshold $\ct(X,S)$ is computed by a weighted blow up $\sigma: Y \to X$ of weights $w=wt(x,y,z,u)=(r_{1},r_{2},a,1)$ satisfying the following:

\begin{itemize}
\item $w(g(z,u))=r_{1}+r_{2}=ad$ where $r_1,r_2,a,d$ are positive integers with $a\ge 5$;
\item $z^{d}\in g(z,u)$ and hence $w(g(z,u))=w(z^{d})$;
\item $\gcd(r_{1}, a)=\gcd(r_{2}, a)=1$;
\item $\ct(X,S)=\frac{a}{m}$ where $m=nw(f)$ and $S$ is defined by the formal power series $f=0$ analytically and locally.
\end{itemize}

We may assume $d\ge 2$ since otherwise $P\in X$ is nonsingular and is treated in the previous section.
Without loss of generality, we may assume $r_1\leq r_2$.
As $\gcd(a,r_1)=\gcd(a,r_2)=1$, there exist non-negative integers $s_i^*<r_i$ such that $1+a_ir_i=as_i^*$ for $i=1,2$. Note that $s_i^*=0$ if and only if $r_i=1$. From assumption that $a\ge 5$, we see $r_2\geq (r_1+r_2)/2=ad/2>1$.
Define 
 $$ w_2=(r_1-a_2d+s_2^*, r_2-s_2^*, a_1,1) \textup{ (resp. } w_1=(r_1-s_1^*, r_2-a_1d+s_1^*, a_2, 1) \textup{ if }r_1>1).$$
Since $w_1\succeq \frac{r_1-s_1^*}{r_1} w$ and $w_2\succeq \frac{r_2-s_2^*}{r_2} w$, by \cite[Lemmas 2.1 and 4.1]{3ct}, we see that 
$$ \lfloor \frac{a_1}{a}m \rfloor  \ge \lceil \frac{r_2-s_2^*}{r_2} m \rceil \textup{ (resp. } \lfloor \frac{a_2}{a}m \rfloor  \ge \lceil \frac{r_1-s_1^*}{r_1} m \rceil \textup{ if }r_1>1). $$
Denote by $h=\gcd(r_1,r_2)$. As the positive integer $r_i'=r_i/h$ has no common divisor with $a$, there exist non-negative integers $s_i^{*'}<r_i'$ and $a_i'<a$ such that $1+a_i'r_i'=as_i^{*'}$ for $i=1,2$. Again, it follows from the assumption $a\ge 5$ that $r_2'>1$. Note that $a_1+a_2=a$.
\begin{claim}\label{cA reduce coprime} 
$$\lfloor\frac{s_2^{*'}}{r_2'}m\rfloor\geq \lceil\frac{a_2'}{a}m\rceil \textup{ (resp. }\lfloor\frac{s_1^{*'}}{r_1'}m\rfloor\geq \lceil\frac{a_1'}{a}m\rceil \textup{ if }r_1>1).$$
\end{claim}
\begin{proof}[Proof of Claim \ref{cA reduce coprime}]
It follows from
   \begin{itemize}
   \item $r_2=hr_2'$, and 
   \item $1+a_2r_2=as_2^{*}$ and $0<a_2<a$ and $0<s_2^{*}<r_2$ and 
       \item $1+a_2'r_2'=as_2^{*'}$ and $0<a_2'<a$ and $0<s_2^{*'}<r_2'$
   \end{itemize}
   that there exists a non-negative integer $b_2$ with $s_2^*=b_2r_2'+s_2^{*'}$ and thus \[1+a_2r_2=as_2^*=ab_2r_2'+as_2^{*'}=ab_2r_2'+1+a_2'r_2'.\]
   This yields $a_2hr_2'=a_2r_2=r_2'(ab_2+a_2').$ In particular, $a_2'=a_2h-ab_2$. 
   
   From $\lfloor\frac{a_1}{a}m\rfloor\geq \lceil\frac{r_2-s_2^*}{r_2}m\rceil$ and $a=a_1+a_2$, one has $\lfloor\frac{s_2^*}{r_2}m\rfloor\geq \lceil\frac{a_2}{a}m\rceil$. Let $\eta_2$ be an integer with $\lfloor\frac{s_2^*}{r_2}m\rfloor\geq \eta_2\geq  \lceil\frac{a_2}{a}m\rceil$. 
  Then we see 
    \begin{align*}
   \frac{s_2^{*'}}{r_2'}m&=\frac{s_2^*-b_2r_2'}{r_2'}m=\frac{s_2^*}{r_2'}m-b_2m=\frac{s_2^*}{r_2}mh-b_2m\\
   &\ge \eta_2h-b_2m\geq \frac{a_2}{a}mh-b_2m=\frac{a_2'+ab_2}{a}m-b_2m=\frac{a_2'}{a}m.
   \end{align*}
   where $\eta_2h-b_2m$ is an integer. This gives the inequality $\lfloor\frac{s_2^{*'}}{r_2'}m\rfloor\geq \lceil\frac{a_2'}{a}m\rceil$.
   
   Similarly, the above argument shows that the inequality  $\lfloor\frac{a_2}{a}m\rfloor\geq \lceil\frac{r_1-s_1^*}{r_1}m\rceil$ implies $\lfloor\frac{s_1^{*'}}{r_1'}m\rfloor\geq \lceil\frac{a_1'}{a}m\rceil$ if $r_1>1$. If $r_1'=1$, then $a_1'=-1, s_1^{*'}=0$ and thus $\lfloor\frac{s_1^{*'}}{r_1'}m\rfloor=0\geq \lceil\frac{a_1'}{a}m\rceil$. We complete the proof of Claim \ref{cA reduce coprime}.
\end{proof}
 
From \cite[Proposition 4.2]{3ct}, we have $dm\ge r_1r_2$ if $r_1>1$. Note that $1\ge \ct=\frac{a}{m}=\frac{r_1+r_2}{dm}$, we have $dm\ge r_2=r_1r_2$ if $r_1=1$. Now $dm$ is an integer greater than $r_1r_2-r_1-r_2$, so there exist non-negative integers $p_1$ and $p_2$ with $dm=p_1r_1+p_2r_2$. Dividing $h=\gcd(r_1,r_2)$, we have $$d'm=p_1r_1'+p_2r_2' \textup{ where }d'=\frac{d}{h}.$$ Replacing $d'm=p_1r_1'+p_2r_2'$ by $d'm=(p_1-r_2'\lfloor \frac{p_1}{r_2'}\rfloor)r_1'+(p_2+\lfloor \frac{p_1}{r_2'}\rfloor r_1')r_2'$, one may assume that $0\le p_1<r_2'$. From $r_1'+r_2'=ad'$, we rewrite $d'm=p_1ad'+k_1r_2'$ where $k_1=p_2-p_1$ is an integer. We have $d'| k_1r_2'$. Since $\gcd(r_1',r_2')=1$ and $r_1'+r_2'=ad'$, we observe $\gcd(d',r_2')=1$ and hence $d'|k_1$. 

Now
    \begin{align*}
    \frac{a_2'}{a}m&=\frac{a_2'}{ad'}(p_1ad'+k_1r_2')=a_2'p_1+\frac{as_2^{*'}-1}{ad'}k_1=a_2'p_1+\frac{s_2^{*'}k_1}{d'}-\frac{k_1}{ad'},  \\
    \frac{s_2^{*'}}{r_2'}m&=\frac{as_2^{*'}}{d'r_2'}p_1d'+\frac{s_2^{*'}k_1}{d'}=\frac{1+a_2'r_2'}{r_2'}p_1+\frac{s_2^{*'}k_1}{d'}=a_2'p_1+\frac{p_1}{r_2'}+\frac{s_2^{*'}k_1}{d'}
    \end{align*}
     and $d'|k_1$, 
       one has $$\lceil\frac{a_2'}{a}m\rceil=a_2'p_1+\frac{s_2^{*'}k_1}{d'}+\lceil-\frac{k_1}{ad'}\rceil \textup{ and }\lfloor\frac{s_2^{*'}}{r_2'}m\rfloor=a_2'p_1+ \frac{s_2^{*'}k_1}{d'}+\lfloor\frac{p_1}{r_2'}\rfloor.$$
    From Claim \ref{cA reduce coprime}, we have $\lfloor\frac{s_2^{*'}}{r_2'}m\rfloor\ge \lceil\frac{a_2'}{a}m\rceil$. It is then the same to $\lfloor\frac{p_1}{r_2'}\rfloor\ge \lceil-\frac{k_1}{ad'}\rceil$. Since $p_1<r_2'$, we see $k_1\ge 0$ and hence $p_2\ge p_1$.
    
    Suppose that $r_1>1$.
    We rewrite $d'm=k_2r_1'+p_2ad'$ where $k_2=p_1-p_2=-k_1$ is a non-positive integer divisible by $d'$. 
    Note that 
    \begin{align*}
    \frac{a_1'}{a}m&=\frac{a_1'}{ad'}(k_2r_1'+p_2ad')=\frac{as_1^{*'}-1}{ad'}k_2+a_1'p_2=\frac{s_1^{*'}k_2}{d'}-\frac{k_2}{ad'}+a_1'p_2,\\
     \frac{s_1^{*'}}{r_1'}m&=\frac{s_1^{*'}k_2}{d'}+\frac{s_1^{*'}a}{d'r_1'}p_2d'=\frac{s_1^{*'}k_2}{d'}+\frac{1+a_1'r_1'}{d'r_1'}p_2d'=\frac{s_1^{*'}k_2}{d'}+a_1'p_2+\frac{p_2}{r_1'}
     \end{align*} and $d'|k_2$, so
        $$\lceil\frac{a_1'}{a}m\rceil=\frac{s_1^{*'}k_2}{d}+a_1'p_2+\lceil-\frac{k_2}{ad'}\rceil \textup{ and } \lfloor \frac{s_1^{*'}}{r_1'}m\rfloor=\frac{s_1^{*'}k_2}{d'}+a_1'p_2+\lceil \frac{p_2}{r_1'}\rceil.$$ 
    From Claim \ref{cA reduce coprime}, we have $\lfloor\frac{s_1^{*'}}{r_1'}m\rfloor\geq \lceil\frac{a_1'}{a}m\rceil$. This is the same with 
    $$\lfloor\frac{p_2}{r_1'}\rfloor\ge \lceil-\frac{k_2}{ad'}\rceil=\lceil \frac{p_2-p_1}{ad'}\rceil.$$
    From the discussion above that $p_2\ge p_1$, we conclude that either $p_1=p_2$ or $p_2\ge r_1'$ holds. 
    
    Suppose that $r_1=1$. As we have shown $p_2\ge p_1$ is an integer where $d'm=p_1r_1'+p_2r_2'$ is positive, we have $p_2\ge 1=r_1'.$ The proof is completed.  
\end{proof}

\begin{prop} \label{possible  3ctcA/n new}
We have $\mathcal{T}_{3, cA/n,\geq 5}^{\textup{can}}\subseteq C\cap [0,\frac{4}{5}]$.
\end{prop}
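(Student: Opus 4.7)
The plan is to mimic the strategy of Proposition \ref{possible 3ctcA} step by step, with the extra care needed to handle the cyclic quotient action and the semi-invariance of the defining power series. By \cite[(2.10) Proposition-definition]{Corti}, the Prokhorov bound $\ct(X,S)\leq \frac45$ from \cite{Prok08}, and the classification of divisorial contractions to a $cA/n$ point in \cite[Theorem 1.2(ii)]{Kawakita05}, any $\ct(X,S)\in\mathcal{T}_{3,cA/n,\geq 5}^{\textup{can}}$ is computed by a weighted blow up $\sigma\colon Y\to X$ with weights $w=\frac{1}{n}(r_1,r_2,a,n)$ on a germ analytically of the form $(xy+g(z,u)=0)/\frac{1}{n}(a_1,-a_1,b,0)$ in $\hat{\bC}^4/\bZ_n$, satisfying the analogous arithmetic conditions: $w(g)=r_1+r_2=ad$, $z^d\in g$ (up to a semi-invariant substitution), $\gcd(r_i,a)=1$, $a\geq 5$, together with the congruences coming from semi-invariance (in particular $r_1\equiv a_1$ and $r_2\equiv -a_1\pmod{n}$ and $a\equiv b\pmod{n}$). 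Writing $\ct(X,S)=\frac{a}{m}$ with $m=nw(f)$, the defining power series $f$ is semi-invariant of an appropriate weight modulo $n$, which restricts the admissible monomials.

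The first step is to introduce the two auxiliary weights
\[
w_2=\tfrac{1}{n}(r_1-a_2d+s_2^*,\ r_2-s_2^*,\ a_1,\ n),\qquad w_1=\tfrac{1}{n}(r_1-s_1^*,\ r_2-a_1d+s_1^*,\ a_2,\ n)\text{ if }r_1>1,
\]
where $a_i,s_i^*$ are determined by $1+a_ir_i=as_i^*$ with $0\leq s_i^*<r_i$ and $a_1+a_2=a$. These are well-defined divisorial valuations over $X$ once one checks that the chosen representatives respect the cyclic quotient; this is where the semi-invariant hypothesis on $f$ is used to guarantee that the weighted multiplicity $w_i(f)$ is correctly computed as $nw_i(f)$ modulo $n$. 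From $w_1\succeq \frac{r_1-s_1^*}{r_1}w$ and $w_2\succeq\frac{r_2-s_2^*}{r_2}w$, invoking \cite[Lemma 2.1]{3ct} and the non-Gorenstein version \cite[Lemma 4.1]{3ct} yields
\[
\lfloor\tfrac{a_1}{a}m\rfloor\geq \lceil\tfrac{r_2-s_2^*}{r_2}m\rceil,\qquad \lfloor\tfrac{a_2}{a}m\rfloor\geq \lceil\tfrac{r_1-s_1^*}{r_1}m\rceil\text{ if }r_1>1.
\]

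Next I would reduce to the case where the weights $r_1,r_2$ are coprime by setting $h=\gcd(r_1,r_2)$, $r_i'=r_i/h$, $d'=d/h$, and defining $s_i^{*'},a_i'$ by $1+a_i'r_i'=as_i^{*'}$. Exactly as in Claim \ref{cA reduce coprime}, one shows
\[
\lfloor\tfrac{s_2^{*'}}{r_2'}m\rfloor\geq\lceil\tfrac{a_2'}{a}m\rceil,\qquad \lfloor\tfrac{s_1^{*'}}{r_1'}m\rfloor\geq\lceil\tfrac{a_1'}{a}m\rceil\text{ if }r_1>1,
\]
the only change being that $m=nw(f)$ carries an extra factor of $n$; this factor is harmless because $\gcd(n,r_i')\mid h$ and the floor/ceiling manipulation only depends on divisibility by $d'$.

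Finally, using the lower bound $dm\geq r_1r_2$ (the $cA/n$ analogue of \cite[Proposition 4.2]{3ct}) and $\ct=\frac{a}{m}=\frac{r_1+r_2}{dm}\leq 1$, I would write $d'm=p_1r_1'+p_2r_2'$ with $0\leq p_1<r_2'$ and non-negative $p_2$, and then repeat verbatim the arithmetic of Proposition \ref{possible 3ctcA}: substituting $m=p_1ad'+(p_2-p_1)r_2'$ into the inequalities above forces $d'\mid p_2-p_1$, then $p_2\geq p_1$, and finally either $p_2=p_1$ or $p_2\geq r_1'$. This places $\ct(X,S)=\frac{r_1'+r_2'}{p_1r_1'+p_2r_2'}$ in $C$, and together with Prokhorov's bound gives the desired inclusion. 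The main obstacle is the first paragraph: one has to verify that the auxiliary weights $w_1,w_2$ define genuine divisorial valuations on $X$ (not just on the cover $\hat{\bC}^4$) and that semi-invariance of $f$ makes $nw_i(f)$ a well-defined integer equal to the weighted multiplicity that enters \cite[Lemma 4.1]{3ct}; once this geometric translation is in place, the number-theoretic part is identical to the $cA$ case.
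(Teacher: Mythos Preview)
Your outline follows the right template, but the translation from the $cA$ case to $cA/n$ is not as formal as you suggest, and several of the specific steps you wrote down are incorrect or insufficient.

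First, the basic arithmetic is off: in the $cA/n$ setting one has $r_1+r_2=adn$, not $ad$. More importantly, the correct auxiliary weights are \emph{not} governed by the identity $1+a_ir_i=as_i^*$ you imported from the $cA$ case; instead one must set up $s_i=\frac{a\mp br_i}{n}$, solve $1=q_ir_i+s_i^*s_i$, put $\delta_i:=-nq_i\pm bs_i^*$, and the key identity becomes $\delta_ir_i+n=as_i^*$. The resulting weights are $w_2=\frac{1}{n}(r_1-\delta_2dn+s_2^*,\,r_2-s_2^*,\,a-\delta_2,\,n)$ etc., and only with this choice do the comparison lemmas (\cite[Lemmas 2.1 and 5.1]{3ct}, not 4.1) apply to give
$\lfloor\frac{a-\delta_i}{a}m\rfloor\geq m_i\geq\lceil\frac{r_i-s_i^*}{r_i}m\rceil$ together with the congruence $m_i\equiv(a-\delta_i)a^{-1}m\pmod n$. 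You acknowledge that semi-invariance should produce a congruence, but you never actually use one; in the paper the congruence is \emph{essential} at three places. It upgrades $m-1\geq m_1+m_2$ to $m-n\geq m_1+m_2$ in the proof of the lower bound (one needs $dmn\geq r_1r_2$, proved via two intermediate claims $m\geq r_2$ and then $m\geq r_1r_2/(dn)$, each of which uses the mod~$n$ information). It is also what makes the divisibility statement $d'n\mid(p_2-p_1)$ (not merely $d'\mid(p_2-p_1)$) go through after writing $d'mn=p_1r_1'+p_2r_2'$ (note again the factor of $n$), and it is needed to extract the sharper floor/ceiling conclusions at the end.

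Second, unlike the $cA$ case, it can happen that $\delta_1<0$, and then there is no weight $w_1$ to compare with. This case has no analogue in your sketch and requires a separate argument: when $\delta_1<0$ one shows $r_1\leq n$ and then combines $m\geq r_2$ with the divisibility $d'n\mid(p_2-p_1)$ to rule out $p_1<p_2\leq n-1$. Your claim that ``the number-theoretic part is identical to the $cA$ case'' is therefore not correct; the mod~$n$ bookkeeping and the $\delta_1<0$ subcase are genuinely new ingredients that must be carried through explicitly.
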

\begin{proof}
The argument is similar but more subtle than that of Proposition \ref{possible 3ctcA}.

Let $\ct(X,S)\in \mathcal{T}^{can}_{3,cA/n}$ be a canonical threshold. We have $\ct(X,S)\leq \frac{4}{5}$ by \cite{Prok08}.
By \cite[(2.10) Proposition-definition]{Corti} and Theorem 1.2(i) in \cite{Kawakita05}, there exist an analytical identification $$P\in X \simeq o\in (\varphi \colon xy+g(z^n,u)=0)\subset \bC^4/\frac{1}{n}(1,-1,b,0)$$ where $o$ denotes the origin of $\bC^4/ \frac{1}{n}(1,-1,b,0)$ and a weighted blow up $\sigma:Y\to X$ of weights $w=wt(x,y,z,u)=\frac{1}{n} (r_{1},r_{2},a,n)$ at the origin satisfying the following:
\begin{itemize}
\item $nw(\varphi)=r_1+r_2=adn$ where $r_1,r_2,a,d,n$ are positive integers with $a\ge 5$ and $n\ge 2$;
\item $z^{dn}\in g(z^n,u)$;
\item $a\equiv br_1$ (mod $n$) and $0<b<n$;
\item $\gcd(b,n)=\gcd(\frac{a-br_1}{n},r_1)=\gcd(\frac{a+br_2}{n},r_2)=1$. 
\item $\ct(X,S)=\frac{a}{m}$ where $m=nw(f)$ and $S$ is defined by the formal power series $f=0$ analytically and locally.
\end{itemize}

By interchanging $r_1$ and $r_2$, one may assume $r_1\leq r_2$. As $a\ge 5$ and $r_1+r_2=adn$, $r_2>1$. On the open subset $Y\cap \{\bar{y}\neq 0\}$, $Y$ is defined by $$\bar{x}+g(\bar{z}^n\bar{y}^{a},\bar{u}\bar{y})/\bar{y}^{ad}=0\subset \hat{\mathbb{C}}^4/\frac{1}{r_2}(-r_1,n,-a,-n),$$ which is isomorphic to $\hat{\mathbb{C}}^3/\frac{1}{r_2}(n,-a,-n)$. By terminal lemma, one has 
\begin{itemize}
    \item $\gcd(r_2,an)=1$ and hence $\gcd(r_1,an)=1$.
\end{itemize}
The conditions $\gcd(b,n)=\gcd(r_1,an)=1$ and $a\equiv br_1$ (mod $n$) imply  \begin{itemize}
    \item $\gcd(a,n)=1$.
\end{itemize}

In what follows, we construct two auxiliary weights $w_1$ and $w_2$ (cf. \cite[3.5]{CJK14}). Put $s_1:=\frac{a-br_1}{n}$ and $s_2:=\frac{a+br_2}{n}$. 
As $\gcd(r_i,s_i)=1$ for $i=1,2$, we  have:
$$\left\{ \begin{array}{cccc} a&=br_1+ns_1; \\  1&=q_1r_1+s^*_1s_1; \\ a&=-br_2+ns_2; \\ 1&=q_2r_2+s^*_2s_2 \end{array} \right.$$ for some integer $0\le s^*_i<r_i$ and some integer $q_i$. Denote by  
\[\delta_1:=-nq_1+bs^*_1,\ \ \delta_2:=-nq_2-bs^*_2.\]
Then we obtain the following useful identities 
\begin{itemize}
    \item $\delta_ir_i+n=as_i^*$, for $i=1,2$, with each $\delta_i\neq 0$ by \cite[Claims 1,2 in 3.5]{CJK14}.
\end{itemize}   
Since $5dn\leq adn=r_1+r_2$ and we have assumed $r_1\leq r_2$, one sees $\delta_2>0$.
Define 
\begin{align*}
&w_2=\frac{1}{n}(r_1-\delta_2 dn+s_2^*,r_2-s_2^*, a- \delta_2, n) \\
\textup{(resp. }&w_1=\frac{1}{n}(r_1-s_1^*, r_2-\delta_1 dn+s_1^*, a-\delta_1,n) \textup{ if }\delta_1>0).
\end{align*}
Note that $w_1\succeq \frac{r_1-s_1^*}{r_1} w$ and $w_2\succeq \frac{r_2-s_2^*}{r_2} w$. 
By \cite[Lemmas 2.1 and 5.1]{3ct}, we see that 
$$
\lfloor \frac{a-\delta_2}{a}m \rfloor  \ge m_2\ge \lceil \frac{r_2-s_2^*}{r_2} m \rceil \textup{ (resp.} \lfloor \frac{a-\delta_1}{a}m \rfloor  \ge m_1\ge \lceil \frac{r_1-s_1^*}{r_1} m \rceil \textup{ if }\delta_1>0), $$ where $m_2:=nw_2(\frak{m}_2)$ (resp. $m_1:=nw_1(\frak{m}_1)$) is a weighted multiplicity for some monomial $\frak{m}_2\in f$ (resp. $\frak{m}_1\in f$ if $\delta_1>0$). Note that 
\begin{align*}
    m_2&=nw_2(\frak{m}_2)\equiv  (r_2-s_2^*)r_2^{-1} nw(\frak{m}_2)\equiv (r_2-s_2^*)r_2^{-1}nw(f)\\
    &\equiv (r_2-s_2^*)r_2^{-1}m \equiv (a-\delta_2)a^{-1}m \textup{ (mod }n),\end{align*} where $a^{-1}$ (resp. $r_2^{-1}$) denotes the inverse of $a$ (resp. $r_2$) modulo $n$. Similarly, $m_1\equiv (r_1-s_1^*)r_1^{-1}m$ (mod $n$) if $\delta_1>0$.

\begin{claim}\label{m >= r2}
Suppose that $a\nmid m$. Then $m\geq r_2$.
\end{claim}
\begin{proof}[Proof of Claim \ref{m >= r2}]
Let $\xi:=m-m_2$ where $m_2$ is an integer above with the properties that $$ \lfloor \frac{a-\delta_2}{a}m \rfloor \ge m_2 \ge \lceil \frac{r_2-s_2^*}{r_2} m \rceil   \textup{ \ \ and } m_2\equiv (a-\delta_2) a^{-1}m \textup{ (mod $n$)}.$$ We have $$ \lfloor \frac{s_2^*}{r_2} m \rfloor \ge \xi \ge \lceil \frac{\delta_2}{a}m  \rceil   \textup{ \ \ and } \xi\equiv \delta_2 a^{-1}m \textup{ (mod $n$)}.$$

Suppose on the contrary that $a\nmid m$ and $m<r_2$. Denote by $t_1$ and $t_2$ the positive integers with $m=r_2t_1-at_2$ and $a-1\geq t_1\ge 1$. Note that $t_1$ is the smallest positive integer with $t_1\equiv mr_2^{-1}$ (mod $a$) where $r_2^{-1}$ denotes the inverse of $r_2$ modulo $a$. From the equation $r_2\delta_2+n=as_2^*$, one sees 
$$ \frac{\delta_2}{a}m=\frac{\delta_2}{a}(r_2t_1-at_2)=\frac{(as_2^*-n)t_1}{a}-\delta_2t_2=s_2^*t_1-\delta_2t_2-\frac{nt_1}{a},$$
which implies 
$$\xi\equiv \delta_2 a^{-1}m\equiv a^{-1}(as_2^*t_1-a\delta_2t_2-nt_1)\equiv s_2^*t_1-\delta_2t_2 \textup{ (mod $n$)}.$$
As $n>\frac{nt_1}{a}>0$ and $\xi\geq  \lceil \frac{\delta_2}{a}m  \rceil$ where $\xi\equiv \delta_2 a^{-1}m \textup{ (mod $n$)}$, this gives $\xi\geq s_2^*t_1-\delta_2t_2$.
On the other hand, we have 
\begin{align*}
\frac{s_2^*}{r_2}m&=\frac{s_2^*}{r_2}(r_2t_1-at_2)=s_2^*t_1-\frac{(r_2\delta_2+n)t_2}{r_2}=s_2^*t_1-\delta_2t_2-\frac{nt_2}{r_2}\\
&\leq \xi-\frac{nt_2}{r_2}<\xi,
\end{align*}
where the last inequality holds as $r_2t_1-at_2=m<r_2$. This leads to a contradiction that $\lfloor \frac{s_2^*}{r_2} m \rfloor \ge \xi>\frac{s_2^*}{r_2}m\geq \lfloor \frac{s_2^*}{r_2} m \rfloor$. The proof of Claim \ref{m >= r2} is finished.
\end{proof}

\begin{claim}\label{cAnub} Suppose that 
$a\nmid m$.
Then $m \ge \frac{r_1 r_2}{dn}$.
\end{claim}
\begin{proof}[Proof of Claim \ref{cAnub}]
Suppose that $\delta_1<0$. Then $r_1\leq (-\delta_1)r_1+as_1^*=n\leq dn$ and hence $dnm\geq r_1m\geq r_1r_2$ by Claim \ref{m >= r2}.

We may thus assume that $\delta_1>0$. It follows from \cite[Remark 3.3]{CJK14} that $\delta_1+\delta_2=a$.
Since $\gcd(b,n)=1$ and $a\nmid m$ where $m$ is an integral combination of $r_1,r_2,a,n$,
one has that $a\nmid \delta_1m$. One sees that
$$\lfloor \frac{a-\delta_1}{a}m \rfloor +  \lfloor \frac{a-\delta_2}{a}m \rfloor=\lfloor \frac{a-\delta_1}{a}m \rfloor +  \lfloor \frac{\delta_1}{a}m \rfloor=m-1.$$
Recall that $m_i\equiv (r_i-s_i^*)r_i^{-1}m\equiv (a-\delta_i)a^{-1}m$ (mod $n$) for $i=1,2$. Therefore 
$$m_1+m_2\equiv (2a-(\delta_1+\delta_2))a^{-1}m\equiv aa^{-1}m\equiv m \textup{ (mod $n$)}.$$
Together with $m-1= \lfloor \frac{a-\delta_1}{a}m \rfloor +  \lfloor \frac{a-\delta_2}{a}m \rfloor \geq m_1+m_2$, one observes $$m-n\geq m_1+m_2\geq \lceil \frac{r_1-s_1^*}{r_1} m \rceil+ \lceil \frac{r_2-s_2^*}{r_2} m \rceil.$$
Note that $$r_1s_2^*+r_2s_1^*=\frac{a(r_1s_2^*+r_2s_1^*)}{a}=\frac{r_1(r_2\delta_2+n)+r_2(r_1\delta_1+n)}{a}=r_1r_2+dn^2.$$

Suppose on the contrary that $m < \frac{r_1r_2}{dn}$. Then we obtain 
\begin{align*} &\lceil \frac{r_1-s_1^*}{r_1} m \rceil+\lceil \frac{r_2-s_2^*}{r_2} m \rceil  \ge  \lceil \frac{r_1-s_1^*}{r_1} m + \frac{r_2-s_2^*}{r_2} m \rceil \\&= \lceil 2m-\frac{r_1s_2^*+r_2s_1^*}{r_1r_2}m\rceil = \lceil m-\frac{dn^2}{r_1r_2}m \rceil=m-\lfloor \frac{dnm}{r_1r_2}n\rfloor\geq m-n+1
\end{align*}
which contradicts to $m-n\geq \lceil \frac{r_1-s_1^*}{r_1} m \rceil+\lceil \frac{r_2-s_2^*}{r_2} m \rceil$. 
The proof of Claim \ref{cAnub} is finished.
\end{proof}

Denote by $h:=\gcd(r_1,r_2)$ and $b'$ the smallest positive integer with $b'\equiv bh$ (mod $n$). Let $d'$ and  $r_i'$ be positive integers with $d=d'h$ and $r_i=r_i'h$ for $i=1,2$. Put $s_1'=\frac{a-b'r_1'}{n}$ and $s_2'=\frac{a+b'r_2'}{n}$.
Since the integer $a$ is relatively prime to $r_i=hr_i'$, we have the following:
$$\left\{ \begin{array}{llll} a&=b'r_1'+ns_1'; \\  1&=q_1'r_1'+s^{*'}_1s_1'; \\ a&=-b'r_2'+ns_2'; \\ 1&=q_2'r_2'+s^{*'}_2s_2 \end{array} \right.$$ for some integer $0\le s^{*'}_i<r_i'$ and some integer $q_i'$. Let 
\[\delta_i':=-nq_i'+b's^{*'}_i, \textup{ for }i=1,2.\] 
As above, we observe the following 
\begin{itemize}
\item $\delta_i'r_i'+n=as_i^{*'}$ and $0\neq \delta_i'<a$ for $i=1,2$.
\item $\delta_2'>0$ and $a|\delta_1'+\delta_2'$.
\item if $\delta_1'>0$, then $a=\delta_1'+\delta_2'$.
\end{itemize}

\begin{claim}\label{cAn reduce coprime} There exists an integer $\xi_2$ with 
$$ \lfloor\frac{s_2^{*'}}{r_2'}m\rfloor\geq \xi_2\geq \lceil\frac{\delta_2'}{a}m\rceil \textup{ and }\xi_2\equiv s_2^{*'}({r_2'}^{-1})m\ \textup{(mod }n).$$ 
Similarly, in the case $\delta_1>0$, there exists an integer $\xi_1$ with
$$ \lfloor\frac{s_1^{*'}}{r_1'}m\rfloor\geq \xi_1\geq \lceil\frac{\delta_1'}{a}m\rceil \textup{ and } \xi_1\equiv s_1^{*'}({r_1'}^{-1})m\ \textup{(mod }n\textup{)}.$$
\end{claim}
\begin{proof}[Proof of Claim \ref{cAn reduce coprime}]
It follows from
   \begin{itemize}
   \item $r_2=hr_2'$ and $\gcd(a,r_2)=1$ and 
   \item $\delta_2r_2+n=as_2^{*}$  and $0<s_2^{*}<r_2$ and 
       \item $\delta_2'r_2'+n=as_2^{*'}$ and $0<s_2^{*'}<r_2'$
   \end{itemize}
   that there exists an integer $b_2$ with $s_2^*=b_2r_2'+s_2^{*'}$ and thus \[\delta_2r_2+n=as_2^*=ab_2r_2'+as_2^{*'}=ab_2r_2'+\delta_2'r_2'+n.\]
   This yields $\delta_2hr_2'=\delta_2r_2=r_2'(ab_2+\delta_2').$ In particular, $\delta_2h=ab_2+\delta_2'$. 
   
   Recall that $\lfloor\frac{a-\delta_2}{a}m\rfloor\geq m_2\geq \lceil\frac{r_2-s_2^*}{r_2}m\rceil$ where $m_2\equiv (r_2-s_2^*)r_2^{-1}m$ (mod $n$). 
   Then we see 
   \begin{align*}
   \frac{s_2^{*'}}{r_2'}m&=\frac{s_2^*-b_2r_2'}{r_2'}m=\frac{s_2^*}{r_2'}m-b_2m=\frac{s_2^*}{r_2}mh-b_2m\\
   &\ge (m-m_2)h-b_2m\geq \frac{\delta_2}{a}mh-b_2m=\frac{\delta_2'+ab_2}{a}m-b_2m=\frac{\delta_2'}{a}m.
   \end{align*}
   Denote by $\xi_2=(m-m_2)h-b_2m$. Then 
   $$  \xi_2 = (m-m_2)h-b_2m \equiv s_2^*r_2^{-1}mh-b_2m\equiv s_2^*{r_2'}^{-1}m-b_2m \equiv s_2^{*'}{r_2'}^{-1}m \textup{ (mod } n\textup{)}.$$
      
      Suppose that $\delta_1>0$. 
      The above argument works by interchanging indices $1$ and 2. We complete the proof of Claim \ref{cAn reduce coprime}.
\end{proof}

From Claim \ref{cAnub}, we have $dmn\ge r_1r_2$. So there exist non-negative integers $p_1$ and $p_2$ with $dmn=p_1r_1+p_2r_2$. Dividing $h=\gcd(r_1,r_2)$, we have $$d'mn=p_1r_1'+p_2r_2'.$$ Replacing by $d'mn=(p_1-r_2'\lfloor \frac{p_1}{r_2'}\rfloor)r_1'+(p_2+\lfloor \frac{p_1}{r_2'}\rfloor r_1')r_2'$, one may assume that $0\le p_1<r_2'$. From $r_1'+r_2'=ad'n$, we rewrite $d'mn=p_1ad'n+k_1r_2'$ where $k_1=p_2-p_1$ is an integer.
Note that 
    \begin{align*}
        \frac{\delta_2'}{a}m&=\frac{\delta_2'}{ad'n}(p_1ad'n+k_1r_2')=\delta_2'p_1+\frac{r_2'\delta_2'k_1}{ad'n}=\delta_2'p_1+\frac{k_1(as_2^{*'}-n)}{ad'n}\\
        &=\delta_2'p_1+\frac{s_2^{*'}k_1}{d'n}-\frac{k_1}{ad'} 
        \end{align*}
    \begin{align*}\textup{ and } \frac{s_2^{*'}}{r_2'}m&=\frac{s_2^{*'}}{d'nr_2'}(p_1ad'n+k_1r_2')=\frac{as_2^{*'}p_1}{r_2'}+\frac{s_2^{*'}k_1}{d'n}=\frac{(\delta_2'r_2'+n)p_1}{r_2'}+\frac{s_2^{*'}k_1}{d'n}\\&=\delta_2'p_1+\frac{s_2^{*'}k_1}{d'n}+\frac{np_1}{r_2'}.
    \end{align*} 
       \begin{claim} \label{int1}$d'n|k_1$.
    \end{claim}
    \begin{proof}[Proof of Claim \ref{int1}]  As $d'mn=p_1ad'n+k_1r_2'$, we have $d'n| k_1r_2'$. Since $\gcd(r_1',r_2')=1$ and $r_1'+r_2'=ad'n$, it follows that $\gcd(d'n,r_2')=1$ and hence $d'n|k_1$ and the claim is verified.
    \end{proof}
    
    From Claims \ref{cAn reduce coprime} and \ref{int1}, there exists an integer $\xi_2$ with $$\xi_2\equiv s_2^{*'}({r_2'}^{-1})m\equiv {r_2'}^{-1}(r_2'(\delta_2'p_1+\frac{s_2^{*'}k_1}{d'n}+\frac{np_1}{r_2'}))\equiv \delta_2'p_1+\frac{s_2^{*'}k_1}{d'n}\  \textup{(mod }n).$$ Let $l_2$ be the integer with $\xi_2=\delta_2'p_1+\frac{s_2^{*'}k_1}{d'n}+nl_2$. Then the inequalities $\lfloor\frac{s_2^{*'}}{r_2'}m\rfloor\ge \xi_2\geq \lceil\frac{\delta_2'}{a}m\rceil$ in Claim \ref{cAn reduce coprime} is then the same to $\lfloor\frac{np_1}{r_2'}\rfloor\ge nl_2\geq \lceil-\frac{k_1}{ad'}\rceil$. As $p_1<r_2'$, we see $k_1\ge 0$ and hence $p_2\ge p_1$.

      Suppose that $\delta_1<0$. Then $r_1\leq -\delta_1r_1=n-as_1^*\leq n$. If $p_2\geq n$, then we get the desired inequalities  $p_2\geq n\geq r_1\geq r_1'$. We shall rule out the case  $p_1<p_2\leq n-1$. By Claim \ref{m >= r2}, we have $$dnr_2\leq dmn=p_1r_1+p_2r_2\leq (n-1)(r_1+r_2).$$ So $((d-1)n+1)r_2\leq (n-1)r_1$. As we have assumed $r_1\leq r_2$, one has $d=1$. Thus, $d'=1$ and $n|k_1=p_2-p_1$ by Claim \ref{int1}. However, this leads to a contradiction $n\leq k_1=p_2-p_1\leq n-1-p_1\leq n-1$ provided that $p_1<p_2\leq n-1$.
     
     Suppose that $\delta_1>0$. Write $d'nm=k_2r_1'+p_2ad'n$ where $k_2=-k_1=p_1-p_2$.
    Note that 
    \begin{align*}
        \frac{\delta_1'}{a}m&=\frac{\delta_1'}{ad'n}(p_2ad'n+k_2r_1')=\delta_1'p_2+\frac{r_1'\delta_1'k_2}{ad'n}=\delta_1'p_2+\frac{(as_1^{*'}-n)k_2}{ad'n}\\
        &=\delta_1'p_2+\frac{s_1^{*'}k_2}{d'n}-\frac{k_2}{ad'} \\
    \textup{ and } \frac{s_1^{*'}}{r_1'}m&=\frac{s_1^{*'}}{d'nr_1'}(p_2ad'n+k_2r_1')=\frac{as_1^{*'}p_2}{r_1'}+\frac{s_1^{*'}k_2}{d'n}=\frac{(\delta_1'r_1'+n)p_2}{r_1'}+\frac{s_1^{*'}k_2}{d'n}\\&=\delta_1'p_2+\frac{s_1^{*'}k_2}{d'n}+\frac{np_2}{r_1'}.
    \end{align*} 
    From Claims \ref{cAn reduce coprime} and \ref{int1}, there exists an integer $\xi_1$ with $$\xi_1\equiv s_1^{*'}({r_1'}^{-1})m\equiv {r_1'}^{-1}(r_1'(\delta_1'p_2+\frac{s_1^{*'}k_2}{d'n}+\frac{np_2}{r_1'}))\equiv \delta_1'p_2+\frac{s_1^{*'}k_2}{d'n}  \textup{(mod }n).$$ Let $l_1$ be the integer with $\xi_1=\delta_1'p_2+\frac{s_1^{*'}k_2}{d'n}+nl_1$. Then the inequalities $\lfloor\frac{s_1^{*'}}{r_1'}m\rfloor\ge \xi_1\geq \lceil\frac{\delta_1'}{a}m\rceil$ in Claim \ref{cAn reduce coprime} are then the same to $\lfloor\frac{np_2}{r_1'}\rfloor\ge nl_1\geq \lceil-\frac{k_2}{ad'}\rceil$. As $-k_2=k_1\ge 0$, we see $np_2/r_1' \ge n$ and  hence $p_2\ge r_1'$ provided that $p_2>p_1$.   
We complete the argument of Proposition \ref{possible 3ctcA/n new}.
\end{proof}

\begin{prop}\label{classfycDct}
    we have  $\mathcal{T}^{\textup{can}}_{3,cD,\ge 5}\subseteq C\cap [0,\frac{4}{5}]$.
\end{prop}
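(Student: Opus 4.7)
The plan is to mimic the proofs of Propositions \ref{possible 3ctcA} and \ref{possible 3ctcA/n new}, which handle the $cA$ and $cA/n$ cases, and adapt them to the $cD$ setting via Kawakita's classification. Since $cD$ singularities are Gorenstein, the additional complications coming from the semi-invariant conditions in the $cA/n$ argument should not appear here, so structurally the proof will be closer to the $cA$ case than to the $cA/n$ case.

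First, I would invoke \cite[(2.10) Proposition-definition]{Corti} together with the relevant part of Kawakita's classification in \cite{Kawakita05} for divisorial contractions to a $cD$ point. This yields an analytic identification $P\in X\simeq o\in (\varphi=0)\subset \hat{\bC}^4$ in a prescribed normal form, together with a weighted blow-up $\sigma: Y\to X$ computing $\ct(X,S)$, whose weights $w=(r_1,r_2,a,1)$ (or in a suitable relabelling) satisfy explicit numerical conditions linking $r_1,r_2,a$ to the exponents appearing in $\varphi$, together with coprimality constraints. Together with $\sigma$ we obtain $\ct(X,S)=\frac{a}{m}$ where $m=w(f)$ and Prokhorov's bound \cite{Prok08} gives $\ct(X,S)\le\frac{4}{5}$, so only the inclusion into $C$ requires work. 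As in the $cA$ argument I would reduce to $r_1\le r_2$ by symmetry.

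Next, I would construct the two auxiliary weights $w_1$ and $w_2$ attached to the integers $s_1^\ast, s_2^\ast$ arising from the coprimality relations between $a$ and the $r_i$'s (exactly as in Proposition \ref{possible 3ctcA}, with the cD normal form absorbed into one extra monomial that contributes to $m$ but not to the decisive weighted inequalities). Using \cite[Lemma 2.1]{3ct} together with the appropriate $cD$-analogue of \cite[Lemma 4.1]{3ct} from \cite{3ct}, I would extract the two key inequalities
\[
\Bigl\lfloor\tfrac{a_1}{a}m\Bigr\rfloor\ge\Bigl\lceil\tfrac{r_2-s_2^\ast}{r_2}m\Bigr\rceil,\qquad
\Bigl\lfloor\tfrac{a_2}{a}m\Bigr\rfloor\ge\Bigl\lceil\tfrac{r_1-s_1^\ast}{r_1}m\Bigr\rceil\text{ if }r_1>1,
\]
where $a_1+a_2=a$. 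I would then reduce these inequalities from $(r_1,r_2)$ to the coprime pair $(r_1',r_2')=(r_1/h,r_2/h)$ with $h=\gcd(r_1,r_2)$, obtaining an analogue of Claim \ref{cA reduce coprime} by repeating the same divisibility computation verbatim (which only uses $w\mapsto w_i$ dominance, not the shape of $\varphi$).

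Finally, I would establish the lower bound $dm\ge r_1r_2$ for the appropriate integer $d$ attached to the $cD$ normal form (the analogue of \cite[Proposition 4.2]{3ct} in the $cD$ setting; this is the $cD$-counterpart of Claim \ref{cAnub}, and is where I expect the main obstacle to lie, since $cD$ has more monomials contributing to $\varphi$ and one must rule out small $m$ by a terminal-lemma / discrepancy argument). Once $dm\ge r_1r_2$ is in hand, writing $dm=p_1r_1+p_2r_2$ with $0\le p_1<r_2'$, dividing by $h$ to get $d'm=p_1r_1'+p_2r_2'$ with $d'=d/h$, and using $r_1'+r_2'=ad'$ together with $\gcd(d',r_2')=1$, the same bookkeeping as in the $cA$ argument produces $p_2\ge p_1$ and the dichotomy $p_1=p_2$ or $p_2\ge r_1'$, showing that $\ct(X,S)=\frac{r_1'+r_2'}{d'm}=\frac{\alpha+\beta}{p_1\alpha+p_2\beta}$ with $\alpha=r_1'$, $\beta=r_2'$ lies in $C$. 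Combined with $\ct(X,S)\le\frac{4}{5}$, this gives the desired inclusion $\mathcal{T}^{\textup{can}}_{3,cD,\ge 5}\subseteq C\cap[0,\tfrac{4}{5}]$.
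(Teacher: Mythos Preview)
Your plan does not match the structure of the $cD$ case and, as written, would not go through.  The core issue is that you treat $cD$ as a minor perturbation of $cA$, with generic weights $(r_1,r_2,a,1)$ and the $s_i^\ast$ machinery from Proposition~\ref{possible 3ctcA}.  In Kawakita's classification (\cite[Theorem~1.2]{Kawakita05}), the $cD$ case with discrepancy $\ge 5$ splits into \emph{two} distinct normal forms, neither of which has the $xy$-term structure that drives the $cA$ argument:
\begin{itemize}
\item a hypersurface $x^2+xq(z,u)+y^2u+\lambda yz^2+\mu z^3+p(y,z,u)=0$ with weights $w=(r+1,r,a,1)$ and the relation $2r+1=ad$ (so $a$ is odd);
\item a complete intersection in $\hat{\bC}^5$ with weights $w=(r+1,r,a,1,r+2)$ and the relation $r+1=ad$.
\end{itemize}
In both cases the two ``large'' weights are consecutive integers $r,r+1$ (Case~1) or $r,r+2$ (Case~2), so the $s_i^\ast$ construction via $1+a_ir_i=as_i^\ast$ is irrelevant.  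The auxiliary weights actually used are explicit and tailored to the $x^2$/$y^2u$ terms: in Case~1 they are $w_1=(r+1-d,r-d,a-2,1)$ and $w_2=(d,d,2,1)$, yielding (via \cite[Lemmas~2.1,~6.3]{3ct} and the computation in \cite[Claim~6.6]{3ct})
\[
\Bigl\lfloor\tfrac{a-2}{a}m\Bigr\rfloor\ge\Bigl\lceil\tfrac{r-d}{r}m\Bigr\rceil,\qquad
\Bigl\lfloor\tfrac{2}{a}m\Bigr\rfloor\ge\Bigl\lceil\tfrac{d}{r+1}m\Bigr\rceil,
\]
and analogously $w_1=(r-d+1,r-d,a-1,1,r-d+2)$, $w_2=(d,d,1,1,d)$ in Case~2 (via \cite[Lemmas~6.7,~6.8]{3ct}).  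These are not instances of your $\lfloor a_i m/a\rfloor\ge\lceil(r_j-s_j^\ast)m/r_j\rceil$, and the lemmas in \cite{3ct} that make them valid are specific to the $cD$ equations, not to a generic ``$cD$ analogue of Lemma~4.1''.

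Consequently the rest of your outline also needs revision.  In Case~1 no coprime reduction is needed since $\gcd(r,r+1)=1$; one writes $dm=p_1r+p_2(r+1)$ directly after proving the bound $dm\ge r(r+1)$ from the two displayed inequalities (using that $a$ is odd, so $a\nmid m$ forces $\lfloor\tfrac{a-2}{a}m\rfloor+\lfloor\tfrac{2}{a}m\rfloor=m-1$).  In Case~2 one has $\gcd(r,r+2)\in\{1,2\}$ and must write $2dm=p_1r+p_2(r+2)$ and split according to the parity of $p_1+p_2$; your proposed ``same bookkeeping as in the $cA$ argument'' does not account for this.  The correct $(\alpha,\beta)$ are $(r,r+1)$ in Case~1 and $(r/h,(r+2)/h)$ in Case~2, not $(r_1',r_2')$ coming from a hypothetical $r_1,r_2$ with $r_1+r_2=ad$.
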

\begin{proof}


Given a canonical threshold $\ct(X,S)\in \mathcal{T}_{3,cD,\ge 5}^{\textup{can}}$. By \cite{Prok08}, we see $\ct(X,S)\leq \frac{4}{5}$. By \cite[(2.10) Proposition-definition]{Corti} and the classification of Kawakita \cite[Theorem 1.2]{Kawakita05}, one sees that $\ct(X,S)$ is realized by a divisorial contraction $\sigma: Y \to X$ classified in Case 1 and Case 2.

\noindent {\bf Case 1.}
There exists an analytical identification: \[ (P\in X)\simeq  o\in ( \varphi: x^2+xq(z,u)+y^2u+\lambda y z^2+\mu z^3+p(y,z,u)=0) \subset \hat{\mathbb{C}}^4,\]
where $o$ denotes the origin of $\hat{\mathbb{C}}^4$ 
such that $\sigma: Y \to X$  is a weighted blow up of weights $w=wt(x,y,z,u)=(r+1,r,a,1)$ with center $P\in X$ and 
\begin{itemize}
\item $2r+1=ad$ where $d\ge 3$ and the integer $a\ge 5$ is odd,
\item $\ct(X,S)=\frac{a}{m}$ where $m=w(f)$ and $S$ is defined by the formal power series $f=0$ analytically and locally.
\end{itemize}


Let $\sigma_1\colon Y_1\to X$ (resp. $\sigma_2\colon Y_2\to X$) be the weighted blow up with weights $w_1=(r+1-d,r-d,a-2,1)$ (resp.  $w_2=(d,d,2,1)$) at the origin $P\in X$. By \cite[Lemma 6.3]{3ct} (see also \cite[Case Ic]{CJK15}), 
the exceptional set of $\sigma_1$ is a prime divisor. From the computation in \cite[Claim 6.6]{3ct}, the defining equation of the exceptional set of $\sigma_2$ is $x^2+\eta z^d$ with odd integer $d\geq 3$ for some nonzero constant $\eta$ and hence the exceptional set of $\sigma_2$ is a prime divisor.
As $2r+1=ad$, it yields
\[w_1\succeq \frac{r-d}{r} w\ \ \textup{ and } \ \ \ w_2\succeq \frac{d}{r+1} w.\]
It follows from \cite[Lemma 2.1]{3ct} that \[ \lfloor \frac{a-2}{a}m\rfloor \ge m_1\ge \lceil \frac{r-d}{r}m\rceil \ \ \textup{ and } \ \ \ \lfloor \frac{2}{a}m\rfloor \ge m_2\ge \lceil \frac{d}{r+1} m\rceil \eqno{\dagger_1} \]
where $m_1:=w_1(f)$ and $m_2:=w_2(f)$ denotes the corresponding weighted multiplicities.
\begin{claim}\label{r1bound}\footnote{Claims \ref{r1bound} and \ref{r2bound} were first obtained in \cite{HLL}.} If $a\nmid m$, then $dm\geq r(r+1)$. 
\end{claim}
\begin{proof}[Proof of Claim \ref{r1bound}]
Suppose on the contrary that $dm<(r+1)r$. We have
\begin{align*}
&  \lceil \frac{r-d}{r}m\rceil+\lceil \frac{d}{r+1}m\rceil \ge   \lceil \frac{r-d}{r}m+\frac{d}{r+1}m \rceil  = \lceil m-\frac{dm}{r(r+1)} \rceil=m.
\end{align*}
However, $a$ is odd and $a\nmid m$, hence  $\frac{2m}{a}$ is not an integer.
This implies 
\[\lfloor \frac{a-2}{a}m\rfloor+\lfloor \frac{2}{a}m\rfloor =m-1, \]
which contradicts to $\dagger_1$. This verifies Claim \ref{r1bound}.
\end{proof}

From Claim \ref{r1bound}, express $dm=p_1r+p_2(r+1)$ for some non-negative integers $p_1$ and $p_2$ with $p_1<r+1$. Note that \begin{align*}
    \frac{2}{a}m&=\frac{2dm}{ad}=\frac{2p_1r+2p_2(r+1)}{2r+1}=p_1+p_2+\frac{p_2-p_1}{2r+1}, \\
    \frac{d}{r+1}m&=\frac{p_1r+p_2(r+1)}{r+1}=p_1+p_2-\frac{p_1}{r+1},\\
    \frac{d}{r}m&=\frac{p_1r+p_2(r+1)}{r}=p_1+p_2+\frac{p_2}{r}.
\end{align*} 
As $p_1$ and $p_2$ are integers, 
the inequality $\lfloor \frac{2}{a}m\rfloor \ge \lceil \frac{d}{r+1} m\rceil$ then implies $\lfloor \frac{p_2-p_1}{2r+1} \rfloor\geq \lceil -\frac{p_1}{r+1}\rceil=-\lfloor \frac{p_1}{r+1}\rfloor=0$. In particular, $p_2\ge p_1$. Similarly, the inequality $\lfloor \frac{a-2}{a}m\rfloor \ge \lceil \frac{r-d}{r}m\rceil$ gives $$p_1+p_2+\lfloor\frac{p_2}{r}\rfloor=\lfloor\frac{d}{r}m \rfloor\geq \lceil \frac{2}{a}m\rceil=p_1+p_2+\lceil\frac{p_2-p_1}{2r+1}\rceil.$$ 
In particular, $p_2\ge r$ provided that $p_2-p_1>0$.

\noindent {\bf Case 2.} 
There exists an analytical identification: $$P\in X\simeq o\in \left( \begin{array}{ll}
\varphi_{1} \colon x^2+yt+p(y,z,u)=0 ;\\
 \varphi_{2} \colon  yu+z^{d}+q(z,u)u+t=0 \\
\end{array} \right) \subset \hat{\mathbb{C}}^5
$$
where $o$ denotes the origin of $\hat{\mathbb{C}}^5$ such that $\sigma: Y \to X$  is a weighted blow up of weights $w=(r+1,r,a,1,r+2)$ with center $P\in X$ and 
\begin{itemize}
\item $r+1=ad$ where $d\ge 2$ and $a\ge 5$,
\item $\ct(X,S)=\frac{a}{m}$ where $m=w(f)$ and $S$ is defined by the formal power series $f=0$ analytically and locally.
\end{itemize}


Compare the weights $w$ with the weights $w_1=(r-d+1,r-d,a-1,1,r-d+2)$ and $w_{2}=(d,d,1,1,d)$. 
By \cite[Lemma 2.1, Lemma 6.7 and Lemma 6.8]{3ct}, we have
$$ \lfloor \frac{a-1}{a} m \rfloor  \ge \lceil \frac{r-d}{r} m \rceil \textup{ and } \lfloor \frac{1}{a} m \rfloor  \ge \lceil \frac{d}{r+2} m \rceil.   \eqno{\dagger_2}$$

\begin{claim}\label{r2bound} If $a\nmid m$, then $2dm\geq (r+2)r$. 
\end{claim}
\begin{proof}[Proof of Claim \ref{r2bound}]
Suppose on the contrary that $2dm< (r+2)r$. We have 
\begin{align*}
&  \lceil \frac{r-d}{r}m\rceil+\lceil \frac{d}{r+2}m\rceil \ge   \lceil  \frac{r-d}{r}m+\frac{d}{r+2}m\rceil  = \lceil m-\frac{2dm}{r(r+2)} \rceil=m.
\end{align*}
However, $a\nmid m$, hence  
\[\lfloor \frac{a-1}{a}m\rfloor+\lfloor \frac{1}{a}m\rfloor =m-1, \]
which contradicts to $\dagger_2$. The proof of Claim \ref{r2bound} is complete.
\end{proof}
From Claim \ref{r2bound}, express $2dm=p_1r+p_2(r+2)$ for some non-negative integers $p_1$ and $p_2$.
Denote by  $h:=\gcd(r,r+2)$.
We have $$\frac{2dm}{h}=p_1\frac{r}{h}+p_2\frac{r+2}{h}.$$ By replacing $p_2$  by $p_2+\lfloor\frac{p_1h}{r+2}\rfloor\frac{r}{h}$ (resp. replacing $p_1$ by $p_1-\lfloor\frac{p_1h}{r+2}\rfloor \frac{r+2}{h}$), we may assume that $0\le p_1<\frac{r+2}{h}$. Note that \begin{align*}
    \frac{1}{a}m&=\frac{2dm}{2ad}=\frac{p_1r+p_2(r+2)}{2(r+1)}=\frac{p_1+p_2}{2}+\frac{p_2-p_1}{2(r+1)}, \\
    \frac{d}{r+2}m&=\frac{2dm}{2(r+2)}=\frac{p_1r+p_2(r+2)}{2(r+2)}=\frac{p_1+p_2}{2}-\frac{p_1}{r+2},\\
    \frac{d}{r}m&=\frac{2dm}{2r}=\frac{p_1r+p_2(r+2)}{2r}=\frac{p_1+p_2}{2}+\frac{p_2}{r}.
\end{align*} 

Suppose that the integer $p_1+p_2$ is even.
The inequality $\lfloor \frac{1}{a} m \rfloor  \ge \lceil \frac{d}{r+2} m \rceil$ then implies $\lfloor \frac{p_2-p_1}{2(r+1)} \rfloor\geq \lceil -\frac{p_1}{r+2}\rceil=0$. In particular, $p_2\ge p_1$. Similarly, the inequality $\lfloor \frac{a-1}{a}m\rfloor \geq \lceil \frac{r-d}{r}m\rceil$ gives $\lfloor\frac{p_2}{r}\rfloor\ge \lceil\frac{p_2-p_1}{2(r+1)}\rceil.$ In particular, $p_2\ge r$ provided that $p_2-p_1>0$.

Suppose next that the integer $p_1+p_2$ is odd. Since $(p_1+p_2)r=2(dm-p_2)$, $r$ is even and $h=2$. 
The inequality $\lfloor \frac{1}{a} m \rfloor  \ge \lceil \frac{d}{r+2} m \rceil$ implies $\lfloor \frac{1}{2}+\frac{p_2-p_1}{2(r+1)} \rfloor\geq \lceil \frac{1}{2}-\frac{p_1}{r+2}\rceil=1$ as $p_1<\frac{r+2}{h}=\frac{r+2}{2}$. In particular, $p_2\ge p_1+r+1$. Similarly, the inequality $\lfloor \frac{a-1}{a}m\rfloor \geq \lceil \frac{r-d}{r}m\rceil$ yields $\lfloor\frac{1}{2}+\frac{p_2}{r}\rfloor\geq \lceil\frac{1}{2}+\frac{p_2-p_1}{2(r+1)}\rceil\ge 1.$ In particular, $p_2\ge \frac{r}{2}$. 
The proof of Proposition \ref{classfycDct} is complete.   
\end{proof}

\begin{prop}\label{classfycD2ct}
    We have  $\mathcal{T}^{\textup{can}}_{3,cD/2,\ge 5}\subseteq C\cap [0,\frac{4}{5}]$. 
\end{prop}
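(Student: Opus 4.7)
The plan is to follow the same template established in the proofs of Propositions \ref{possible 3ctcA}, \ref{possible 3ctcA/n new}, and especially \ref{classfycDct}, while incorporating the semi-invariance considerations that appeared in Proposition \ref{possible 3ctcA/n new}. First, I would invoke \cite[(2.10) Proposition-definition]{Corti} together with Kawakita's classification \cite{Kawakita05} to realize $\ct(X,S)\in \mathcal{T}^{\textup{can}}_{3,cD/2,\ge 5}$ as $\frac{a}{m}$, computed by a weighted blow up $\sigma\colon Y\to X$ where locally $P\in X$ is a cyclic quotient $\hat{\bC}^4/\frac{1}{2}(\cdots)$ (or $\hat{\bC}^5/\frac{1}{2}(\cdots)$ in the higher embedding case) cut out by one or two semi-invariant equations with explicit weights $w$ of the form analogous to $(r+1,r,a,1)$ or $(r+1,r,a,1,r+2)$ modified by a $\frac{1}{2}$-factor, with $a\ge 5$, $\ct(X,S)\le \frac{4}{5}$ by \cite{Prok08}, and $S=\{f=0\}$ where $f$ is a semi-invariant power series.

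Next, I would introduce two auxiliary weights $w_1$ and $w_2$, exactly mirroring those in the proof of Proposition \ref{classfycDct} (namely the analogues of $(r-d+1,r-d,a-2,1)$ and $(d,d,2,1)$ in Case 1, and of $(r-d+1,r-d,a-1,1,r-d+2)$ and $(d,d,1,1,d)$ in Case 2), checking that each $w_i$ defines a valid weighted blow up whose exceptional set is a prime divisor (this is essentially the content of \cite[Lemma 6.3]{3ct}, \cite[Lemma 6.7]{3ct}, \cite[Lemma 6.8]{3ct}, whose $cD/2$-analogues should follow by the same argument used in \cite{CJK15}). Comparing $w_i$ with $\frac{r-d}{r}w$ and $\frac{d}{r+2}w$ (or the appropriate rescalings), \cite[Lemma 2.1]{3ct} then yields the key floor/ceiling inequalities
\[
\Bigl\lfloor \tfrac{a-2}{a}m\Bigr\rfloor \ge m_1 \ge \Bigl\lceil \tfrac{r-d}{r}m\Bigr\rceil \textup{ and } \Bigl\lfloor \tfrac{2}{a}m\Bigr\rfloor \ge m_2 \ge \Bigl\lceil \tfrac{d}{r+2}m\Bigr\rceil,
\]
or the analogous pair in the other case; crucially, the semi-invariance of $f$ under the $\frac{1}{2}$-action forces $m_1$ and $m_2$ into a fixed residue class mod $2$, which must be tracked exactly as the mod $n$ congruences $m_i\equiv (a-\delta_i)a^{-1}m$ were tracked in the proof of Proposition \ref{possible 3ctcA/n new}.

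With these inequalities in hand, the next step is to establish a lower bound on $m$ (an analogue of Claims \ref{r1bound} and \ref{r2bound}, and of Claim \ref{cAnub} in the $cA/n$ case) by arguing that if $m$ were too small then $\lceil\frac{r-d}{r}m\rceil+\lceil\frac{d}{r+2}m\rceil\ge m$, while the semi-invariance combined with $a\nmid m$ forces $\lfloor\frac{a-2}{a}m\rfloor+\lfloor\frac{2}{a}m\rfloor\le m-1$ (or $m-2$), contradicting the displayed inequalities. Once $2dm\ge r(r+2)$ (or whatever the correct analogue is) is in place, we may write $2dm=p_1r+p_2(r+2)$ for non-negative integers, divide out $h=\gcd(r,r+2)\in\{1,2\}$, and reduce $p_1$ modulo $\frac{r+2}{h}$. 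Then translating the floor/ceiling inequalities into inequalities on $p_1,p_2$ exactly as in Proposition \ref{classfycDct} will give $p_2\ge p_1$ and either $p_1=p_2$ or $p_2\ge \frac{r+2}{h}-1$ (with a parity case distinction when $p_1+p_2$ is odd, forced by the $\frac{1}{2}$-semi-invariance), which together with $a=\frac{r+1+(r+1)}{2d}$ (or $a=\frac{r+(r+2)}{2d}$) puts $\frac{a}{m}$ in $C\cap [0,\frac{4}{5}]$.

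The main obstacle I expect is the bookkeeping around the semi-invariance: in the $cD/2$ setting, $f$ is semi-invariant under a $\frac{1}{2}$-action, so the weighted multiplicities $m, m_1, m_2$ are constrained mod $2$, and the construction of $w_1,w_2$ must be refined so that the corresponding exceptional divisor is still a prime Weil divisor on the quotient (not merely a divisor upstairs). Getting the correct lower bound on $m$ in the parity case that is forced by these congruences, and then matching the resulting $(p_1,p_2)$ with the defining conditions of $C$, is the technical heart of the argument and is precisely the $cD/2$-analogue of the argument surrounding Claim \ref{cAnub} and the final case distinction in Proposition \ref{classfycDct} when $p_1+p_2$ is odd. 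Once these are handled, the inclusion $\mathcal{T}^{\textup{can}}_{3,cD/2,\ge 5}\subseteq C\cap [0,\frac{4}{5}]$ follows just as in the preceding three propositions.
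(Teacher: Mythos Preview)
Your plan is essentially the same as the paper's, and is correct in outline. A few points where the paper's execution differs from what you anticipate:

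First, in the $cD/2$ case the integer $r$ is always \emph{odd}. In Case~1 the paper records this as part of Kawakita's classification; in Case~2 it is deduced from the terminal lemma applied to the chart $U_2$. Consequently $\gcd(r,r+2)=1$ in Case~1 and $\gcd(r,r+4)=1$ in Case~2, so your $h$ is always $1$ and there is no odd-parity subcase for $p_1+p_2$: one has $(p_1+p_2)r=2dm-2p_2$ (resp.\ $(4d+2)m-4p_2$) even, $r$ odd, hence $p_1+p_2$ even automatically. This makes the $cD/2$ argument in fact \emph{simpler} than the $cD$ Case~2, not more complicated.

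Second, the role of the mod~$2$ congruences is slightly different from what you write. The sum of the floors is exactly $m-1$ (when $a\nmid m$ and $a$ is odd), not $m-2$; the congruences $m_1\equiv m$, $m_2\equiv 0$ (mod~$2$) instead force $m_1+m_2\equiv m$ (mod~$2$), which improves the bound $m_1+m_2\le m-1$ to $m_1+m_2\le m-2$. This extra drop by one is what yields the lower bound $2dm\ge r(r+2)$ (resp.\ $(4d+2)m\ge r(r+4)$). The congruences are used a second time in the $p_1,p_2$ analysis: the integers $m_2-(p_1+p_2)$ and $m-m_1-(p_1+p_2)$ are shown to be even, which is what rules out the borderline values and gives $p_2\ge r$ when $p_2>p_1$.

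Finally, the relevant lemmas from \cite{3ct} ensuring the exceptional sets of $\sigma_1,\sigma_2$ are prime divisors are Lemmas~7.3, 7.6, 7.7 (already the $cD/2$ versions), not 6.3, 6.7, 6.8; and the correct auxiliary weights carry $2d$ (resp.\ $2d+1$) rather than $d$ in the appropriate slots, reflecting the $\tfrac{1}{2}$-scaling.
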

\begin{proof}
Given a canonical threshold $\ct(X,S)\in \mathcal{T}_{3,cD/2}^{\textup{can}}$.
We have $\ct(X,S)\leq \frac{4}{5}$ by \cite{Prok08}. By \cite[(2.10) Proposition-definition]{Corti} and the classification of Kawakita \cite[Theorem 1.2]{Kawakita05}, one sees that $\ct(X,S)$ is realized by a divisorial contraction $\sigma: Y \to X$ given in Case 1 and Case 2.

\noindent {\bf Case 1.}
There exists an analytical identification: \[ P\in X\simeq o\in (\varphi: x^2+xzq(z^2,u)+y^2u+\lambda y z^{2\alpha -1}+p(z^2,u)=0 ) \subset \hat{\mathbb{C}}^4/\frac{1}{2}(1,1,1,0)\]
where $o$ denotes the origin of $\hat{\mathbb{C}}^4/\frac{1}{2}(1,1,1,0)$ such that $\sigma: Y \to X$  is a weighted blow up of weights $w=\frac{1}{2}(r+2,r,a,2)$ with center $P\in X$ and 
\begin{itemize}
\item $r+1=ad$ where both $a\geq 5$ and $r$ are odd;
\item $\ct(X,S)=\frac{a}{m}$ where $m=w(f)$ and $S$ is defined by the formal power series $f=0$ analytically and locally.
\end{itemize}
Denote by $\sigma_1\colon Y_1\to X$ and $\sigma_2\colon Y_2\to X$ the weighted blow ups with weights $w_1=\frac{1}{2}(r+2-2d,r-2d,a-2,2)$ and $w_2=\frac{1}{2}(2d,2d,2,2)$ at the origin $P\in X$ and $m_1:=2w_1(f)$ and $m_2:=2w_2(f)$ the weighted multiplicities respectively. By \cite[Lemma 7.3]{3ct}, the exceptional set of $\sigma_1$ is a prime divisor. Note that the exceptional set of $\sigma_2$ is a $\mathbb{Z}_2$ quotient of $$\{x^2+z^{2d}=0\}\subset \mathbb{P}(2d,2d,2,2).$$ It is a prime divisor. 
By \cite[Lemma 2.1]{3ct} and $r+1=ad$, we have
$$ \lfloor\frac{a-2}{a} m \rfloor  \ge m_1\ge \lceil \frac{r-2d}{r} m \rceil \textup{ and }\lfloor\frac{2}{a} m \rfloor  \ge m_2\ge \lceil \frac{2d}{r+2} m \rceil $$ with $m_1\equiv (a-2)a^{-1}m\equiv m$ and $m_2\equiv 2a^{-1}m\equiv 0$ (mod $2$). See Remark \ref{alternative} for an alternative explanation of the inequality $\lfloor\frac{2}{a} m \rfloor  \ge m_2$.

\begin{claim} \label{raboundcD21} If $a\nmid m$, then $2dm\geq r(r+2)$. 
\end{claim} 
\begin{proof}[Proof of Claim \ref{raboundcD21}]
Since $a\nmid m$ and $a$ is odd, we have 
$m-1=\lfloor \frac{a-2}{a}m\rfloor+\lfloor \frac{2m}{a}\rfloor \geq m_1+m_2$. As $m_1+m_2\equiv m$ (mod 2), one has $m-2\geq m_1+m_2$.

Suppose on the contrary that $2dm< r(r+2)$.
We see 
\begin{align*}
m_1+m_2&\geq \lceil \frac{r-2d}{r}m\rceil + \lceil \frac{2d}{r+2} m\rceil\ge   \lceil \frac{r-2d}{r}m+\frac{2d}{r+2} m\rceil\\
&=m-\lfloor \frac{4d}{(r+2)r}m\rfloor\geq m-1,
\end{align*}
which leads to a contradiction that $m-2\geq m_1+m_2\geq m-1$.
This verifies Claim \ref{raboundcD21}.
\end{proof}
From Claim \ref{raboundcD21}, express $2dm=p_1r+p_2(r+2)$ for some non-negative integers $p_1$ and $p_2$ with $p_1<r+2$. Note that \begin{align*}
    \frac{2}{a}m&=\frac{2dm}{ad}=\frac{p_1r+p_2(r+2)}{r+1}=p_1+p_2+\frac{p_2-p_1}{r+1}, \\
    \frac{2d}{r+2}m&=\frac{p_1r+p_2(r+2)}{r+2}=p_1+p_2-\frac{2p_1}{r+2},\\
    \frac{2d}{r}m&=\frac{p_1r+p_2(r+2)}{r}=p_1+p_2+\frac{2p_2}{r}.
\end{align*}
Now the integer $(p_1+p_2)r=2dm-2p_2$ is even where $r$ is odd. This gives that $p_1+p_2$ is even. The inequalities $\lfloor \frac{2}{a}m\rfloor \ge m_2\geq \lceil \frac{2d}{r+2} m\rceil$ then imply $\lfloor \frac{p_2-p_1}{r+1} \rfloor\geq m_2-(p_1+p_2)\geq\lceil -\frac{2p_1}{r+2}\rceil$. From the assumption $p_1<r+2$ and that $m_2-(p_1+p_2)$ is even, one sees $\lfloor \frac{p_2-p_1}{r+1} \rfloor\geq 0$. In particular, $p_2\ge p_1$. 

Similarly, the inequalities $\lfloor \frac{a-2}{a}m\rfloor \ge m_1\ge \lceil \frac{r-2d}{r}m\rceil$ are equivalent to $\lfloor \frac{2d}{r}m\rfloor\geq m-m_1 \geq \lceil\frac{2}{a}m\rceil$. This implies $\lfloor\frac{2p_2}{r}\rfloor\ge m-m_1-(p_1+p_2)\geq \lceil\frac{p_2-p_1}{r+1}\rceil$ where the integer $m-m_1-(p_1+p_2)$ is even. 
In particular, $p_2\ge r$ provided that $p_2-p_1>0$.


\noindent {\bf Case 2.}
There exists an analytical identification: $$P\in X\simeq o\in \left( \begin{array}{ll}
\varphi_{1}:= x^2+yt+p(z^2,u)=0 \\
 \varphi_{2}:= yu+z^{2d+1}+q(z^{2},u)zu+t=0 \\
\end{array} \right)$$ in $\hat{\mathbb{C}}_{x,y,z,u,t}^5/\frac{1}{2}(1,1,1,0,1)$ where $o$ denotes the origin of $\hat{\mathbb{C}}_{x,y,z,u,t}^5/\frac{1}{2}(1,1,1,0,1)$ such that  $\sigma: Y \to X$  is a weighted blow up of weights $w=\frac{1}{2}(r+2,r,a,2,r+4)$ with center $P\in X$ and 
\begin{itemize}
\item $r+2=a(2d+1)$ where $d$, $r$ and $a\geq 5$ are positive integers;
\item $\ct(X,S)=\frac{a}{m}$ where $m=w(f)$ and $S$ is defined by the formal power series $f=0$ analytically and locally.
\end{itemize}
On the open subset $U_2=\{\overline{y}\neq 0\}$, $Y$ is isomorphic to $\hat{\mathbb{C}}_{\overline{x},\overline{y},\overline{z}}^3/\frac{1}{r}(-(r+2),2,-a)$. It follows from terminal lemma that both integers $a$ and $r$ are odd. 
Denote by $\sigma_1\colon Y_1\to X$ and $\sigma_2\colon Y_2\to X$ the weighted blow up with weights $w_{1}=\frac{1}{2}(r-2d+1,r-2d-1,a-1,2,r-2d+3)$ and $w_2=\frac{1}{2}(2d+1,2d+1,1,2,2d+1)$ at the origin $P\in X$ respectively. 
By \cite[Lemma 2.1, Lemma 7.6 and Lemma 7.7]{3ct}, there exist two integers $m_1$ and $m_2$ satisfying 
$$  \lfloor \frac{a-1}{a} m \rfloor  \ge m_1\ge \lceil \frac{r-2d-1}{r} m \rceil \textup{ and } \lfloor \frac{1}{a} m \rfloor  \ge m_2\ge \lceil \frac{2d+1}{r+4} m \rceil $$ and $m_1\equiv (a-1)a^{-1}m$ and $m_2\equiv a^{-1}m$ (mod $2$).

\begin{claim} \label{raboundcD22} If $a\nmid m$, then $(4d+2)m\geq r(r+4)$. 
\end{claim} 
\begin{proof}[Proof of Claim \ref{raboundcD22}]
Since $a\nmid m$, we have 
\[m-1=\lfloor \frac{1}{a}m\rfloor+\lfloor \frac{a-1}{a}m\rfloor\geq m_1+m_2.\]
As $m_1+m_2\equiv m$ (mod 2), one has $m-2\geq m_1+m_2$.

Suppose on the contrary that $(4d+2)m< r(r+4)$.
We see 
\begin{align*}
m_1+m_2&\geq \lceil \frac{2d+1}{r+4} m\rceil +\lceil \frac{r-2d-1}{r}m\rceil \ge   \lceil \frac{2d+1}{r+4} m+ \frac{r-2d-1}{r}m\rceil\\
&=m-\lfloor \frac{8d+4}{(r+4)r}m\rfloor\geq m-1.
\end{align*}
which leads to a contradiction that $m-2\geq m_1+m_2\geq m-1$.
This verifies the claim.
\end{proof}
From Claim \ref{raboundcD22}, express $(4d+2)m=p_1r+p_2(r+4)$ for some non-negative integers $p_1$ and $p_2$ with $p_1<r+4$. 
Note that \begin{align*}
    \frac{1}{a}m&=\frac{(4d+2)m}{(4d+2)a}=\frac{p_1r+p_2(r+4)}{2(r+2)}=\frac{p_1+p_2}{2}+\frac{p_2-p_1}{r+2}, \\
    \frac{2d+1}{r+4}m&=\frac{(4d+2)m}{2(r+4)}=\frac{p_1r+p_2(r+4)}{2(r+4)}=\frac{p_1+p_2}{2}-\frac{2p_1}{r+4},\\
    \frac{2d+1}{r}m&=\frac{(4d+2)m}{2r}=\frac{p_1r+p_2(r+4)}{2r}=\frac{p_1+p_2}{2}+\frac{2p_2}{r}.
\end{align*} 
Now the integer $(p_1+p_2)r=(4d+2)m-4p_2$ is even where $r$ is odd. In particular, two integers $p_1+p_2$ and $p_2-p_1$ are even. The inequalities $\lfloor \frac{1}{a}m\rfloor \ge m_2\geq \lceil \frac{2d+1}{r+4} m\rceil$ then imply $\lfloor \frac{p_2-p_1}{r+2} \rfloor\geq m_2-\frac{p_1+p_2}{2}\geq\lceil -\frac{2p_1}{r+4}\rceil$. From the assumption $p_1<r+4$ and that $$m_2-\frac{p_1+p_2}{2}\equiv a^{-1}m-\frac{p_1+p_2}{2}\equiv \frac{p_2-p_1}{r+2}\equiv p_2-p_1\equiv 0 \textup{ (mod $2$)},$$ one sees $\lfloor \frac{p_2-p_1}{r+2} \rfloor\geq 0$. In particular, $p_2\ge p_1$. 

Similarly, the inequalities $\lfloor \frac{a-1}{a}m\rfloor \ge m_1\ge \lceil \frac{r-2d-1}{r}m\rceil$ are equivalent to $\lfloor \frac{2d+1}{r}m\rfloor\geq m-m_1 \geq \lceil\frac{1}{a}m\rceil$. This implies $\lfloor\frac{2p_2}{r}\rfloor\ge m-m_1-\frac{p_1+p_2}{2}\geq \lceil\frac{p_2-p_1}{r+2}\rceil$ where $$m-m_1-\frac{p_1+p_2}{2}\equiv m-(a-1)a^{-1}m-\frac{p_1+p_2}{2}\equiv \frac{p_2-p_1}{r+2}\equiv 0 \textup{ (mod $2$)}.$$
In particular, $p_2\ge r$ provided that $p_2-p_1>0$.
This completes the proof of Proposition \ref{classfycD2ct}.
\end{proof}

\begin{rem} \label{alternative} We keep notions in Case 1 of the proof of Proposition \ref{classfycD2ct}.
As $w_2\succeq \frac{2d}{r+2}w$, one sees $m_2\geq \lceil \frac{2d}{r+2} m \rceil$. In what follows, we shall provide an alternative argument to establish the inequality  $\lfloor\frac{2}{a} m \rfloor  \ge m_2$. Let $\cX=\bC^4/\frac{1}{2}(1,1,1,0)$ and $\sigma_w: \cY\to \cX$ be the weighted blow up with weights $w=\frac{1}{2}(r+2,r,a,2)$. Put $\overline{\varphi}_1=\varphi(\overline{x}^{\frac{r+2}{2}},\overline{x}^{\frac{r}{2}}\overline{y}, \overline{x}^{\frac{a}{2}}\overline{z},\overline{x}\overline{u})/\overline{x}^{r+1}$. As $w(x^2)=r+2$ and $w(\varphi)=r+1$, $\overline{x}\in \overline{\varphi}_1$ and hence $$Y\cap U_1\simeq (\overline{\varphi}_1=0)/\frac{1}{r+2}(2,-r,-a,-2) \simeq \bC_{\overline{y},\overline{z},\overline{u}}^3/\frac{1}{r+2}(-r,-a,-2),$$ where $U_1:=\{\overline{x}\neq 0\}\simeq \bC^4/\frac{1}{r+2}(2,-r,-a,-2)$ is an open subset of $\cY$. Denote by $\sigma\colon Y\to X$ be the induced morphism with exceptional divisor $E$. Let $\mu:\cZ_1\to \cY$ be the weighted blow up (at the origin of $U_1$) with weights $w_2=\frac{1}{r+2}(2d, 2d, 1, r+2-2d)$. Denote by $Z_1=Y_{\cZ_1}$ the proper transform. Then the induced map $\mu_1=\mu|_{Z_1}:Z_1\to Y$ is Kawamata blow up. In particular, the exceptional set of $\mu_1$, denoted by $E_1$, is a prime divisor of $Z_1$. 
Then we see  
\begin{align*}
 K_Y&=\sigma^* K_X+\frac{a}{2}E, \ \ K_{Z_1}=\mu_1^*K_Y+\frac{1}{r+2}E_1,\\
 S_Y&=\sigma^*S-\frac{m}{2}E, \ \ S_{Z_1}=\mu_1^*S_Y-\frac{m'}{r+2}E_1,
\end{align*}
for some non-negative integer $m'$.
Note that it follows from $w_2=\frac{2d}{r+2}w+\frac{1}{r+2}(0,2d,1,r+2-2d)$ that $E_{Z_1}=\mu_1^*E-\frac{2d}{r+2}E_1,$. By direct toric computations, one has 
\begin{align*}
K_{Z_1}&=\mu_1^*\sigma^*K_X+\frac{a}{2}E_{Z_1}+(\frac{2d}{r+2}\cdot \frac{a}{2}+\frac{1}{r+2})E_1=\mu_1^*\sigma^*K_X+\frac{a}{2}E_{Z_1}+\frac{2}{2}E_1\\
S_{Z_1}&=\mu_1^*\sigma^*S-\frac{m}{2}E_{Z_1}-(\frac{2d}{r+2}\cdot\frac{m}{2}+\frac{m'}{r+2})E_1=\mu_1^*\sigma^*S-\frac{m}{2}E_{Z_1}-\frac{m_2}{2}E_1,
\end{align*}
where $m_2=2w_2(f)$ and the divisor $S$ is defined by the formal power series $f=0$ analytically and locally. As $\ct(X,S)$ is the canonical threshold and $E_1$ is a prime divisor over $X$, $\frac{2}{m_2}\geq \ct(X,S)=\frac{a}{m}$. In particular, $\lfloor\frac{2}{a} m \rfloor  \ge m_2$. 
\end{rem}

\begin{thm}\label{classfy3ct}
We have $\mathcal{T}^{\textup{can}}_{3} = \{0\} \cup \{\frac{4}{5}\}\cup \mathcal{T}_{3, \textup{sm}}^{\textup{can}}.$  \end{thm}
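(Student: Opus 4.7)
The plan is to combine the decomposition (3) in Remark \ref{quickremark1} with the four containment propositions already established. Concretely, I would argue that the containment $\mathcal{T}^{\textup{can}}_3 \supseteq \{0\}\cup\{\tfrac{4}{5}\}\cup \mathcal{T}^{\textup{can}}_{3,\textup{sm}}$ is immediate: $0\in \mathcal{T}^{\textup{can}}_3$ by definition (take $S$ non-$\bQ$-Cartier, or take $t=0$), $\frac{4}{5}\in \mathcal{T}^{\textup{can}}_3$ was exhibited by Prokhorov \cite[Example 3.11]{Prok08}, and $\mathcal{T}^{\textup{can}}_{3,\textup{sm}} \subseteq \mathcal{T}^{\textup{can}}_3$ by definition.

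For the reverse inclusion, I would start from the refined decomposition recorded in (3) of Remark \ref{quickremark1}:
\[
\mathcal{T}_{3}^{\textup{can}} = \{0\} \cup \{\tfrac{4}{5}\} \cup \mathcal{T}_{3,\textup{sm}}^{\textup{can}} \cup \mathcal{T}_{3, cA,\geq 5}^{\textup{can}} \cup \mathcal{T}_{3, cA/m,\geq 5}^{\textup{can}} \cup \mathcal{T}_{3,cD,\geq 5}^{\textup{can}} \cup \mathcal{T}_{3, cD/2,\geq 5}^{\textup{can}}.
\]
It then suffices to absorb the last four subsets into $\mathcal{T}^{\textup{can}}_{3,\textup{sm}}$. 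This follows formally from the chain of inclusions
\[
\mathcal{T}_{3,*,\geq 5}^{\textup{can}} \subseteq C\cap\bigl[0,\tfrac{4}{5}\bigr] \subseteq C\cap [0,1] = \mathcal{T}^{\textup{can}}_{3,\textup{sm}},
\]
where the first inclusion is Proposition \ref{possible 3ctcA} (for $cA$), Proposition \ref{possible 3ctcA/n new} (for $cA/n$), Proposition \ref{classfycDct} (for $cD$), and Proposition \ref{classfycD2ct} (for $cD/2$), and the final equality is Theorem \ref{classfysmct}.

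Putting the two directions together yields the claimed identity $\mathcal{T}^{\textup{can}}_{3} = \{0\}\cup\{\tfrac{4}{5}\}\cup \mathcal{T}^{\textup{can}}_{3,\textup{sm}}$. There is no real obstacle at this stage since the heavy lifting has been done in the earlier sections: the decomposition came from Kawakita's classification \cite{Kawakita05} of threefold divisorial contractions extracting a divisor over a point (together with the Gorenstein smooth case reduction via Kawakita \cite{Kawakita01}), the value $\frac{4}{5}$ is singled out of $\aleph_4$ in Remark \ref{quickremark1}, and the four $*= cA, cA/n, cD, cD/2$ propositions already verify that every canonical threshold arising from a singular center with weighted discrepancy $\geq 5$ has the shape $\tfrac{\alpha+\beta}{p_1\alpha+p_2\beta}$ in $C\cap [0,\tfrac{4}{5}]$. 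The proof of the theorem is therefore a short bookkeeping statement collecting these results.
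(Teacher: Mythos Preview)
Your proposal is correct and follows essentially the same route as the paper's proof: invoke the decomposition (3) from Remark \ref{quickremark1}, then absorb each $\mathcal{T}_{3,*,\geq 5}^{\textup{can}}$ into $\mathcal{T}_{3,\textup{sm}}^{\textup{can}}$ via Propositions \ref{possible 3ctcA}, \ref{possible 3ctcA/n new}, \ref{classfycDct}, \ref{classfycD2ct} together with Theorem \ref{classfysmct}. Two very minor remarks: the paper inserts an explicit $\bQ$-factorialization step before applying the decomposition (so that $X$ can be assumed $\bQ$-factorial terminal, which is what Kawakita's classification requires), and your parenthetical justification for $0\in\mathcal{T}_3^{\textup{can}}$ is not quite right (one should instead take $X$ with a strictly canonical, non-terminal point on $S$), but neither affects the argument.
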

\begin{proof}
We show non-trivial inclusion $\mathcal{T}^{\textup{can}}_{3} \subseteq \{0\} \cup \{\frac{4}{5}\}\cup \mathcal{T}_{3, \textup{sm}}^{\textup{can}}$. Suppose that $
 \ct(X,S)\in \mathcal{T}^{\textup{can}}_{3}$ is non-zero. By taking $\bQ$-factorization, $X$ can be assumed to have at worst $\bQ$-factorial terminal singularities. From the decomposition in (3) in   Remark \ref{quickremark1}, we may assume $\ct(X,S)\in \mathcal{T}^{\textup{can}}_{3,*,\geq 5}$ for some singular type $*=cA, cA/n, cD$ or $cD/2$. The rest follows from Theorem \ref{classfysmct}, Propositions \ref{possible 3ctcA}, \ref{possible  3ctcA/n new},  \ref{classfycDct} and  \ref{classfycD2ct}.
\end{proof}

\begin{rem}\label{quickremark2} 
It is known that the accumulation points of $\mathcal{T}^{can}_{3}$ is 
the set $\{0\} \cup \{\frac{1}{k}\}_{k \in \bZ_{\ge 2}} $ by \cite{3ctpr} and \cite{HLL} independently. This result can be recovered using Theorem \ref{classfy3ct}. It is also interesting to study  $(\frac{1}{k-1}, \frac{1}{k}) \cap C$ for any $k \in \bZ_{\geq 2}$. The $k=2$ case was explicitly characterized in \cite{3ct}. 
The set $\mathcal{T}^{\textup{can}}_{3,\textup{sm}}\cap (\frac{1}{3},\frac{1}{2})$ was studied by the third named author in his master thesis \cite{Wu} and listed in Table \ref{tab:my_label}. These two cases can also be recovered by Theorems \ref{classfysmct} and \ref{classfy3ct}.
\newpage 


\begin{table}[]
    \centering
    \begin{tabular}{|c| c| c| c | l |} 
 \hline
 $\alpha$ & $\beta$ & $p_1$ & $p_2$ & Remark \\ [0.5ex] 
 \hline
 $\alpha$ & $\beta$ & $p_1$ & $3$ & $\begin{array}{l} \ct=\frac{1}{3}+\frac{(3-p_1)\alpha}{3(p_1\alpha+3\beta)}\\
 \textup{with } 0\leq p_1\leq 2, \\ 1\leq \alpha\leq 3, \\  (2-p_1)\alpha<\beta, \\\gcd(\alpha,\beta)=1\end{array}$\\ 
  \hline
 $1$ & $1$ & $1$ & $4$ & $\ct=2/5$ \\
  \hline
 $1$ & $1$ & $0$ & $5$ & $\ct=2/5$ \\
  \hline
 $1$ & $2$ & $0$ & $4$ & $\ct=3/8$ \\
 \hline
 $2$ & $3$ & $0$ & $4$ & $\ct=5/12$ \\
 \hline
$2$ & $3$ & $1$ & $4$& $\ct=5/14$\\
 \hline
$2$ & $5$ & $0$ & $4$ & $\ct=7/20$\\
 \hline
 $3$ & $4$ & $0$ & $4$ & $\ct=7/16$\\
 \hline
 $3$ & $4$ & $1$ & $4$ & $\ct=7/19$\\
 \hline
  $3$ & $4$ & $0$ & $5$ & $\ct=7/20$\\
 \hline
 $3$ & $5$ & $0$ & $4$ & $\ct=2/5 $\\
  \hline
 $3$ & $5$ & $1$ & $4$ & $\ct=8/23 $\\
  \hline
 $3$ & $7$ & $0$ & $4$ & $\ct=5/14 $\\
  \hline
 $3$ & $8$ & $0$ & $4$ & $\ct=11/32$\\
  \hline
 $4$ & $5 $& $0$ & $4$ & $\ct=9/20$\\
  \hline
 $4$ & $5$ & $1$ & $4$ & $\ct=3/8 $\\
  \hline
 $4$ & $5$ & $0$ & $5$ & $\ct=9/25 $\\
  \hline
 $4$ &$7$ & $0$ & $4$ & $\ct=11/28 $\\
   \hline
 $4$ & $7$ & $1$ & $4$ & $\ct=11/32$\\
  \hline
 $4$ & $9$ & $0$ & $4$ & $\ct=13/36 $\\
  \hline
 $4$ & $11$ & $0$ & $4$ & $\ct=15/44 $\\
  \hline
 $5$ & $6$ & $0$ & $5$ & $\ct=11/30 $\\
   \hline
 $5$ & $7$ & $0$ & $5$ & $\ct=12/35 $\\
 \hline  
\end{tabular} 
\caption{canonical thresholds in $\mathcal{T}^{\textup{can}}_{3,\textup{sm}}\cap (\frac{1}{3},\frac{1}{2})$}
    \label{tab:my_label}
\end{table}    
\end{rem}

\end{document}